\theoremstyle{definition}
\newtheorem{theorem}{Theorem}[section]
\newtheorem{lemma}[theorem]{Lemma}
\newtheorem{proposition}[theorem]{Proposition}
\newtheorem{definition}[theorem]{Definition}
\newtheorem{conjecture}[theorem]{Conjecture}
\newcommand{\prob}{\mathbb{P}}
\newcommand{\expect}{\mathbb{E}}
\begin{document}

\title{The adjacent vertex distinguishing total chromatic number}
\author{Tom Coker\thanks{
	Department of Pure Mathematics and Mathematical Statistics, 
	University of Cambridge, UK}
	 \and Karen Johannson\thanks{
			Department of Mathematical Sciences,
			University of Memphis, 
			Memphis, TN 38152, USA}}
\date{\today}

\maketitle

\begin{abstract}
A well-studied concept is that of the total chromatic number.  A proper total colouring of a graph is a colouring of both vertices and edges so that every pair of adjacent vertices receive different colours, every pair of adjacent edges receive different colours and every vertex and incident edge receive different colours.  This paper considers a strengthening  of this condition and examines the minimum number of colours required for a total colouring with the additional property that for any adjacent vertices $u$ and $v$, the sets of colours incident to $u$ is different from the set of colours incident to $v$.  It is shown that there is a constant $C$ so that for any graph $G$, there exists such a colouring using at most $\Delta(G) + C$ colours.
\end{abstract}

\section{Preliminaries}

\subsection{Definitions}

Let $G=(V,E)$ be a simple graph with no loops or multiple edges.  For $k \in \mathbb{Z}^+$, a map $\varphi: V \cup E \to \{1, 2, \ldots, k\} = [k]$ is called a \emph{proper total $k$-colouring of $G$} if{f} 
\begin{itemize}
	\item for every $u, v \in V$, if $uv \in E$, then $\varphi(u) \neq \varphi(v)$ and $\varphi(u) \neq \varphi(uv)$,
	\item and for every pair $uv, uw \in E$ of adjacent edges, $\varphi(uv) \neq \varphi(uw)$.
\end{itemize}
In other words, $\varphi|_{V}$ is a proper vertex colouring, $\varphi|_E$ is a proper edge colouring  and every vertex receives a colour different from the colour of each of its incident edges.

The \emph{total chromatic number of $G$}, denoted $\chi''(G)$, is the least $k$ for which there exists a proper total $k$-colouring of $G$.

The maximum degree of a graph $G$ is denoted, as usual,  by $\Delta(G)$.  Under any proper total colouring a vertex of maximum degree in $G$ receives a colour different from that of any of its edges and thus $\chi''(G) \geq \Delta(G) +1$.

\begin{definition}
Let $G=(V, E)$ be a graph and $\varphi$ be a proper total colouring of $G$.  For each $v \in V$ the \emph{colour set of $v$ (with respect to $\varphi$)} is
\[
C_{\varphi}(v) = \{\varphi(v)\} \cup \{\varphi(vw): w \in N(v)\}.
\]
A vertex $v \in V$ is said to be \emph{distinguished from $u$ by $\varphi$} if{f} $C_{\varphi}(u) \neq C_{\varphi}(v)$ and $\varphi$ is said to be \emph{adjacent vertex distinguishing} if{f} every pair of adjacent vertices in $G$ are distinguished from each other by $\varphi$.

The least $k$ for which $G$ has an adjacent vertex distinguishing total $k$-colouring is called the \emph{adjacent vertex distinguishing total chromatic number}, denoted $\chi_{at}(G)$.
\end{definition}

Suppose that $\varphi$ is a total colouring of $G$ with the property that any colour used for a vertex never appears on an edge.  Then $\varphi$ is adjacent vertex distinguishing since if a vertex $u$ receives colour $1$ and $v$ is a neighbour of $u$ then $1 \notin C_{\varphi}(v)$ since $1$ appears only on vertices and $\varphi(v) \neq 1$ since $\varphi$ is a proper colouring.  As usual, let $\chi(G)$ denote the chromatic number of $G$ and let $\chi'(G)$ denote the edge-chromatic number of $G$.  Then $\chi_{at}(G) \leq \chi(G) + \chi'(G)$ and thus, by Vizing's theorem and Brooks' theorem, if $G$ is not a complete graph or an odd cycle then $\chi_{at}(G) \leq 2\Delta(G)+1$.  The hope, however, is that by allowing any colour to appear on both edges and vertices, the number of necessary colours for a proper total adjacent vertex distinguishing colouring can be reduced from $\chi(G) + \chi'(G)$. 

\subsection{Vertex distinguishing edge colourings}

The study of proper colourings that induce different colour sets on different vertices was introduced independently by Aigner, Triesch and Tuza \cite{ATT90}; Burris and Schelp \cite{BS97};  and \v{C}ern\'{y}, Hor\v{n}\'{a}k and Sot\'{a}k \cite{CHS96}.  These three groups each examined the number of colours needed to properly edge colour a graph so that every vertex has a colour set different from that of every other vertex.

Zhang, Liu, and Wang \cite{LWZ02} relaxed this condition, examining proper edge colourings that distinguish pairs of adjacent vertices.

\begin{definition}
Given a graph $G=(V, E)$, the \emph{adjacent vertex distinguishing edge chromatic number}, denoted $\chi'_a(G)$ is the least $k$ such that there exists $\varphi$, a proper edge $k$-colouring of $G$, with the property that if $u, v \in V$ with $uv \in E$, then $\{\varphi(uw): w \in N(u)\} \neq \{\varphi(vz): z \in N(v)\}$.
\end{definition}

In their paper, Zhang \emph{et al.} determine the exact value of $\chi_a'(G)$ for several classes of graphs and conjecture that if $G$ is a connected graph with $V(G) \geq 6$, then $\chi_a'(G) \leq \Delta(G) +2$.

Balister, Gy\H{o}ri, Lehel and Schelp \cite{BGLS07} showed that if $G$ is a graph with $\Delta(G) = 3$ then $\chi'_a(G) \leq 5$.  They also showed that  for $G$, any bipartite graph, $\chi'_{a}(G) \leq \Delta +2$ and for $G$ any graph, $\chi'_a(G) \leq \Delta(G) + O(\log_2 \chi(G))$.  The upper bound on $\chi_a'(G)$ for arbitrary graphs was sharpened by Hatami \cite{hH05} who, using probabilistic techniques, showed that if $G$ is a graph with $\Delta(G) \geq 10^{20}$, then

\begin{equation}\label{E:avdcHatami}
\chi'_a(G) \leq \Delta(G) +300.
\end{equation}

\subsection{Total colourings}

The study of adjacent vertex distinguishing total colourings was first introduced by Zhang, Chen, Li, Yao, Lu and Wang \cite{ZC05} who determined precise values of $\chi_{at}$ for several classes of graphs, including cycles, complete graphs, complete bipartite graphs and trees, and made the following conjecture.

\begin{conjecture}\label{Conj:atupperbd}
For every graph $G$,
\[
\chi_{at}(G) \leq \Delta(G) + 3.
\]
\end{conjecture}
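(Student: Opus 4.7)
The plan is to combine a structured base total colouring with a randomised local perturbation, in the spirit of Hatami's argument for the edge analogue \eqref{E:avdcHatami}. First I would produce a proper total colouring of $G$ using $\Delta(G)+2$ colours — this is precisely the statement of the Total Colouring Conjecture of Behzad and Vizing, which one would either assume or establish under whatever hypothesis is in force for $G$. One extra colour then remains in the palette of size $\Delta(G)+3$, and this reserve colour will serve as the main lever for breaking ties $C_\varphi(u)=C_\varphi(v)$.

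Second, I would analyse precisely when two adjacent vertices $u,v$ share a colour set. Since $C_\varphi(u)$ contains $\varphi(u)$ together with the colours of all edges at $u$, the condition $C_\varphi(u)=C_\varphi(v)$ forces $\deg(u)=\deg(v)$ and imposes a rigid matching between the two stars and the two vertex colours. I would split into cases by degree: when $\deg(u),\deg(v)\le\Delta(G)-2$ there is ample slack (each colour set misses at least four palette colours), and one can simply swap $\varphi(u)$ with the reserve colour, or perform a Vizing-style Kempe swap on an edge at $u$, to break the coincidence without creating new conflicts. The delicate case is $\deg(u)=\deg(v)\in\{\Delta(G)-1,\Delta(G)\}$, where I would re-randomise the edge colours in a ball of bounded radius around $uv$ subject to propriety, treat each pair $\{C_\varphi(u)=C_\varphi(v)\}$ as a bad event depending only on edges within constant graph distance of $uv$, and apply the Lovász Local Lemma, provided one can show $\prob(C_\varphi(u)=C_\varphi(v)) = o(\Delta(G)^{-2})$ uniformly.

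The main obstacle is exactly this probability bound. With only $\Delta(G)+3$ colours in play, a vertex of degree $\Delta(G)$ has only two colours outside its colour set, so the event that two maximum-degree neighbours have identical colour sets reduces to a highly constrained matching condition with almost no entropy left to destroy; the random perturbation does very little work on its own. This is why the tight constant $3$ is so much harder than the bound $\Delta(G)+C$ with $C$ large that one expects from a more relaxed LLL argument: achieving the conjecture almost certainly requires a structural analysis of the near-extremal configurations — pairs of same-degree vertices sharing most of their neighbourhood, analogous to $K_n$ and the other tight examples in \cite{ZC05} — to supplement the purely probabilistic step.
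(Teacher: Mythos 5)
The statement you are trying to prove is Conjecture~\ref{Conj:atupperbd}, which the paper explicitly presents as an \emph{open conjecture} due to Zhang et al.\ \cite{ZC05}; the paper does not prove it and does not claim to. What the paper actually proves is Theorem~\ref{T:mainthm}, namely $\chi_{at}(G) \leq \chi''(G) + C_0$ for a (large) absolute constant $C_0$, and, via Molloy and Reed's bound \eqref{E:totalchrom}, the corollary $\chi_{at}(G) \leq \Delta(G) + C'$. So there is no ``paper's own proof'' to compare against, and your task was to prove something that is not known.

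Your sketch also does not constitute a proof, for two independent reasons that you yourself partially identify. First, your opening step assumes a proper total $(\Delta(G)+2)$-colouring exists, which is the Behzad--Vizing Total Colouring Conjecture --- itself open; the best general bound in the paper is $\chi''(G) \leq \Delta(G) + 10^{26}$, which leaves no room in a palette of size $\Delta(G)+3$. Even granting it, your second step contains the acknowledged gap: with only $\Delta(G)+3$ colours, a vertex of degree $\Delta(G)$ is missing only two colours, so the bad events $\{C_\varphi(u) = C_\varphi(v)\}$ for adjacent maximum-degree vertices cannot be shown to have probability $o(\Delta(G)^{-2})$ by a local re-randomisation, and the Local Lemma step does not close. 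This is precisely why the paper retreats to a large additive constant: Lemma~\ref{L:highdegA} deletes up to $M$ edges per vertex (with $M$ on the order of $81$) and Lemma~\ref{L:highdegB} deletes a further constant number, then recolours all deleted edges with $M+3$ fresh colours, using the slack those extra colours provide to make the concentration and Local Lemma estimates work. Your intuition about the near-extremal configurations is correct, and it is exactly the reason the paper proves $\Delta(G)+C'$ rather than $\Delta(G)+3$.
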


There are graphs that attain the upper bound in Conjecture \ref{Conj:atupperbd}.  For example, Zhang \emph{et al.} \cite{ZC05} showed that when $n$ is odd, $\chi_{at}(K_n) = n+2 = \Delta(K_n) +3$.

Since an adjacent vertex distinguishing total colouring is also a proper total colouring, for any graph $G$, $\chi''(G) \leq \chi_{at}(G)$.  While it has been conjectured, independently by both Behzad \cite{mB65} and Vizing \cite{vV68}, that $\chi''(G) \leq \Delta(G) + 2$, currently, the best-known upper bound for graphs with sufficiently large maximum degree was given by Molloy and Reed \cite{MR98} who showed that there exists a $\Delta_0$ such that if $G$ is any graph with $\Delta(G) \geq \Delta_0$, then 
\begin{equation}\label{E:totalchrom}
\chi''(G) \leq \Delta(G) + 10^{26}.
\end{equation}

While a proof of Conjecture \ref{Conj:atupperbd} would require a significant improvement on the known upper bound for the total chromatic number of an arbitrary graph, in the case $\Delta(G) = 3$, the conjecture has been verified, independently by Wang \cite{hW07}, Chen \cite{xC08} and Hulgan \cite{jH09}.  Hulgan, in fact, showed that for any graph $G$ with $\Delta(G)=3$, there is an adjacent vertex distinguishing total $6$-colouring of $G$ with the property that at most one colour appears on both edges and vertices.  For graphs of larger maximum degree, Liu, An, and Gao \cite{AGL09} showed that if $G$ is a graph with $\Delta(G) = \Delta$ sufficiently large and $\delta(G) \geq 32 \sqrt{\Delta \ln \Delta}$, then $\chi_{at}(G) \leq \Delta + 10^{26} + 2 \sqrt{\Delta \ln \Delta}$.  Further details on the history of the problem can be found, for example, in Hulgan \cite{jH10}.

\subsection{Results}

Following an argument similar to that used by Hatami \cite{hH05} to prove the upper bound given in  equation (\ref{E:avdcHatami}), in this paper, a proof is given for the following upper bound on the $\chi_{at}(G)$.

\begin{theorem}\label{T:mainthm}
There exists $C_0 >0$ such that for every graph $G$, 
\[
\chi_{at}(G) \leq \chi''(G) + C_0.
\]
\end{theorem}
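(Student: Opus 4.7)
The plan is to mimic the probabilistic strategy Hatami used to prove the bound~(\ref{E:avdcHatami}), adapted to the total colouring setting. First, fix a proper total colouring $\varphi_0$ of $G$ using $k=\chi''(G)$ colours drawn from $\{1,\ldots,k\}$, and enlarge the palette by a reserve set $R=\{k+1,\ldots,k+C_0\}$ of $C_0$ further colours, where $C_0$ is an absolute constant to be fixed at the end. Construct a random total colouring $\varphi$ as follows: independently mark each edge $e$ of $G$ with a small constant probability $p$, independently assign each marked edge a uniformly random colour from $R$ in place of $\varphi_0(e)$, and leave the vertex colours and the colours of unmarked edges alone. Since reserve colours never appear on vertices or on unmarked edges, $\varphi$ is automatically a proper total colouring except on pairs of adjacent edges that happen to be marked and to receive the same reserve colour.

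Two families of bad events are then relevant. For each pair of adjacent edges $e,f$ let $B_{e,f}$ be the event that both are marked and assigned the same reserve colour, so that $\prob[B_{e,f}]\le p^2/C_0$. For each pair of adjacent vertices $u,v$ let $D_{u,v}$ be the event $C_\varphi(u)=C_\varphi(v)$. Each bad event depends only on the random choices associated with edges incident to at most two vertices, so each event is mutually independent of all but $O(\Delta(G)^2)$ others. The Lov\'asz Local Lemma will then produce a colouring avoiding every bad event provided $\prob[D_{u,v}]=O(\Delta(G)^{-3})$ uniformly. For small $\Delta(G)$ the theorem is trivial upon taking $C_0$ large enough, so one may assume $\Delta(G)$ is large.

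The crux is bounding $\prob[D_{u,v}]$. Decompose $C_\varphi(w)=B_w\cup R_w$, where $B_w$ collects the non-reserve colours at $w$ (its vertex colour together with the base colours of the unmarked incident edges) and $R_w\subseteq R$ is the random set of reserve colours appearing on marked edges at $w$; the event $D_{u,v}$ forces $B_u=B_v$ and $R_u=R_v$ simultaneously. Edges with exactly one endpoint in $\{u,v\}$ contribute independent randomness to exactly one of $R_u,R_v$; edges joining $u$ or $v$ to common neighbours contribute correlated effects to both; the edge $uv$ itself contributes identically to both sides. A case analysis stratified by the number of \emph{private} edges at $u$ and at $v$ and by whether $\varphi_0$ already distinguishes $u$ from $v$ should yield the desired bound: for pairs with many private edges, standard concentration gives $R_u\neq R_v$ with probability $1-O(\Delta(G)^{-3})$ once $p$ and $C_0$ are chosen appropriately. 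The main obstacle is the most delicate configuration, namely a pair of adjacent vertices of comparable small degree whose open neighbourhoods largely coincide and for which $C_{\varphi_0}(u)=C_{\varphi_0}(v)$; there one has only a handful of random bits available and must exploit the finite reserve palette to show that $B_u\neq B_v$ or $R_u\neq R_v$ with constant probability. Once this uniform estimate is in place, tuning $p$ and $C_0$ and invoking the Local Lemma produces an adjacent vertex distinguishing total colouring of $G$ using $\chi''(G)+C_0$ colours.
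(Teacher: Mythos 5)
Your plan is in the right spirit (random edge recolouring with a constant reserve palette, then the Lov\'{a}sz Local Lemma), which is broadly what the paper does, but there are two gaps that the paper resolves with structural decisions your sketch does not make and cannot be filled in without changing the framework.

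First, marking each edge with a \emph{constant} probability $p$ breaks the properness side of the argument. You have $\prob(B_{e,f}) \le p^2/C_0$, which is a constant, and $B_{e,f}$ is dependent on $\Theta(\Delta)$ other events, so the Local Lemma condition cannot be satisfied for large $\Delta$. More fundamentally, with constant $p$ the marked subgraph has maximum degree $\Theta(\Delta)$, so no constant-size palette can properly edge-colour it, and no clever choice of weights in the asymmetric Local Lemma rescues this. The paper instead marks with probability $p = \lambda/\Delta$ and then truncates so that every vertex is incident to at most $M = O(1)$ retained edges (Lemma \ref{L:highdegA}); the resulting marked subgraph has bounded maximum degree, and Vizing's theorem is applied to it deterministically with $M+3$ fresh colours, eliminating the properness bad events entirely. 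The bounded degree also ensures the new colours do not drown out the base palette: under your constant-$p$ scheme a high-degree vertex would acquire essentially every reserve colour, so $R_u = R_v$ for all adjacent high-degree pairs, and the reserve palette would contribute nothing.

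Second, and more seriously, the case you identify as ``the most delicate configuration'' is not a delicate configuration to be handled with more care --- it is the point at which the edge-recolouring framework genuinely fails. For adjacent $u,v$ of bounded degree with $C_{\varphi_0}(u) = C_{\varphi_0}(v)$ and largely overlapping neighbourhoods, there are only $O(1)$ independent random bits, so $\prob(D_{u,v})$ is bounded away from zero, while the Local Lemma needs it to decay polynomially in $\Delta$. Your remark that one need only achieve ``constant probability'' is not sufficient: those events have $\Theta(\Delta^2)$ neighbours in the dependency graph. The paper circumvents this entirely by splitting $V$ into $V_\ell$ (degree $\le \Delta/2$) and $V_h$. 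Proposition \ref{P:lowdeg} distinguishes every vertex of $V_\ell$ from all of its neighbours \emph{deterministically}, by recolouring the vertex itself: since $2\deg(u) < k$, there is always a spare colour for $u$ that avoids all forbidden colours and avoids producing a colour set equal to any neighbour's. The probabilistic edge argument is then applied only to pairs of high-degree vertices, for which the required decay in $\Delta$ is achievable. Your proposal, which recolours edges only and leaves vertices alone, has no analogue of this step, and without it the uniform bound on $\prob(D_{u,v})$ you posit is unattainable.
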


Applying Molloy and Reed's \cite{MR98} upper bound on $\chi''(G)$, yields an upper bound on $\chi_{at}(G)$ in terms of $\Delta(G)$.

\begin{theorem}
There exists $C'>0$ such that for every graph $G$, 
\[
\chi_{at}(G) \leq \Delta(G) + C'.
\]
\end{theorem}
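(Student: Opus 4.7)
The plan is to deduce this bound as a short corollary of Theorem \ref{T:mainthm} combined with the Molloy--Reed bound (\ref{E:totalchrom}). The only subtlety is that (\ref{E:totalchrom}) holds only once $\Delta(G) \geq \Delta_0$ for some absolute threshold $\Delta_0$, so graphs of small maximum degree must be handled separately; however, in that regime the trivial bounds from Section~1.1 already suffice, and their contribution is just a fixed additive constant.

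First, for $\Delta(G) \geq \Delta_0$ I would apply (\ref{E:totalchrom}) to obtain $\chi''(G) \leq \Delta(G) + 10^{26}$ and then chain with Theorem \ref{T:mainthm} to conclude
\[
\chi_{at}(G) \leq \chi''(G) + C_0 \leq \Delta(G) + 10^{26} + C_0.
\]
Second, for $\Delta(G) < \Delta_0$ I would invoke the observation recorded in Section~1.1 that $\chi_{at}(G) \leq \chi(G) + \chi'(G)$, which by Vizing's and Brooks' theorems is at most $2\Delta(G) + 1 \leq 2\Delta_0 + 1$ whenever $G$ is neither a complete graph nor an odd cycle. Complete graphs and odd cycles of maximum degree below $\Delta_0$ contribute only finitely many further cases, and in each of those families $\chi_{at}$ is already known and is bounded by a constant depending only on $\Delta_0$ (for instance $\chi_{at}(K_n) = n+2$ for odd $n$). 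Setting $C'$ to be the maximum of $10^{26} + C_0$ and the finite supremum of $\chi_{at}(G) - \Delta(G)$ over the small cases then yields the theorem.

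The main obstacle is not located in this proof at all, but upstream in Theorem \ref{T:mainthm}: once one knows that the adjacent vertex distinguishing property can be imposed at a bounded additive cost over $\chi''(G)$, substituting the best absolute upper bound for the total chromatic number is mechanical. I therefore anticipate no genuine difficulty here beyond some light bookkeeping, namely verifying that $\Delta_0$ is indeed an absolute constant (as stated in \cite{MR98}) so that the resulting $C'$ is itself absolute and independent of $G$.
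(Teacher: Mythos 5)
Your proposal is correct and matches the paper's intended (essentially one-line) deduction: combine Theorem \ref{T:mainthm} with the Molloy--Reed bound (\ref{E:totalchrom}) for large $\Delta(G)$, and absorb the finitely many small-degree cases into the constant via the trivial bound $\chi_{at}(G) \leq \chi(G) + \chi'(G)$. The paper handles the small-degree caveat only in passing (acknowledging that once $\Delta(G) \geq \Delta_0$ is treated, the remaining cases contribute a bounded amount), so your explicit bookkeeping of that regime is a faithful expansion rather than a different route.
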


The idea of the proof of Theorem \ref{T:mainthm} is to begin with a proper total colouring and recolour of some of the vertices and edges so that the resulting colouring remains a proper total colouring and becomes adjacent vertex distinguishing, but in such a way that only a constant number of new colours are added.  While the process of recolouring vertices is deterministic, the edges to be recoloured are chosen at random and probabilistic techniques are used to show that there is a `good' choice of edges for recolouring in a way to obtain an adjacent vertex distinguishing total colouring.

In Section \ref{S:probtools}, a few standard probabilistic results that are each used repeatedly are stated.  In Section \ref{S:lowdeg} it is proved that any proper total colouring can be redefined on the vertices to obtain a proper total colouring that distinguishes vertices of degree at most $\Delta(G)/2$ from their neighbours.  In Section \ref{S:highdeg}, it is shown that given any proper total colouring, there is a subset of the edges that can be recoloured with no more than a constant number of new colours so that vertices of degree at least $\Delta(G)/2+1$ are distinguished from their neighbours.  Finally, in Section \ref{S:mainthmpf}, these previous two results are combined to prove Theorem \ref{T:mainthm}.

The proof that the edges of $G$ can be recoloured appropriately requires an assumption that the maximum degree of $G$ is at least as large as a fixed constant.  However, once Theorem \ref{T:mainthm} is proved for graphs with sufficiently large maximum degree, it immediately holds true for all graphs, potentially with a larger constant $C_0$.

Since different techniques are applied to the subgraph induced by the vertices  of `low degree' and to that induced by the vertices of `high degree', it will be convenient to use the following notation.

\begin{definition}\label{D:lowdeg-highdeg}
For any graph $G=(V,E)$, set 
\begin{align*}
V_{\ell} 	&= \{v \in V: \deg(v) \leq \Delta(G)/2\} \text{ and}\\
V_h 			&= \{v \in V: \deg(v) > \Delta(G)/2\}.
\end{align*}
\end{definition}

Throughout, the following notation for graphs is used.  For any graph $G=(V,E)$ and for sets $A, B \subseteq V$, not necessarily disjoint, let the set of edges between $A$ and $B$ be
$E(A, B) = \{uv \in E: u \in A \text{ and } v \in B\}$.

  Given $F \subseteq E$, let $G[F]$ be the subgraph of $G$ induced by the edges in $F$.  For $v \in V$, let the degree of $v$ in $F$ be $\deg_{F}(v)=|\{f \in F: v \in f\}|$ and denote by $F(v) = \{vw \in F: w \in N(v)\}$, the edges of $F$ that are incident to $v$.  

If $\varphi$ is a total colouring of $G$ and $D \subseteq V \cup E$, then let the set of colours appearing in $D$ be $\varphi[D] = \{\varphi(v): v \in D \cap V\} \cup \{\varphi(uv): uv \in E \cap D\}$.

\section{Probabilistic tools}\label{S:probtools}

The following lemma gives estimates for the unlikelihood of a binomial random variable being either much larger or much smaller than its mean.  In the following form, it can be found, for example, in \cite[pp 267--268]{AS00}.
\begin{lemma}\label{L:binomtail}
Let $X$ be a binomial random variable with parameters $n\in \mathbb{Z}^+$ and $p \in (0,1)$.  For $pn < m <n$,
\[
\prob(X \geq m) 	\leq e^{m-np} \left(\frac{np}{m}\right)^m
\]
 and for $0< m < pn$,
\[
\prob(X  < m) 	\leq e^{-(m-np)^2/2pn}.
\]
\end{lemma}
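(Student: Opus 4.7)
The plan is to prove both tail bounds through the standard Chernoff moment--generating--function method, treating $X$ as a sum $X=\sum_{i=1}^n X_i$ of independent Bernoulli$(p)$ random variables so that $\expect[e^{tX}]=\prod_i\expect[e^{tX_i}]=(1-p+pe^t)^n$ for every $t\in\mathbb{R}$. The elementary inequality $1+x\leq e^x$ then yields the convenient bound $\expect[e^{tX}]\leq e^{np(e^t-1)}$, which will be the workhorse.

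For the upper tail, I would take $t>0$ and apply Markov's inequality to $e^{tX}$ to get
\[
\prob(X\geq m)=\prob(e^{tX}\geq e^{tm})\leq e^{-tm}\expect[e^{tX}]\leq \exp\bigl(np(e^t-1)-tm\bigr).
\]
Optimising the exponent in $t$ leads to $e^t=m/(np)$, which is a legal choice of $t>0$ precisely because $m>np$. Substituting back gives the exponent $m-np-m\ln(m/(np))$, which is exactly the logarithm of $e^{m-np}(np/m)^m$, delivering the first inequality.

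For the lower tail, I would run the same argument with $t<0$, so that writing $s=-t>0$,
\[
\prob(X\leq m)=\prob(e^{-sX}\geq e^{-sm})\leq e^{sm}\expect[e^{-sX}]\leq\exp\bigl(np(e^{-s}-1)+sm\bigr).
\]
Minimising the exponent over $s>0$ gives $e^{-s}=m/(np)$, which is admissible because $m<np$. This yields the sharp exponent $m-np+m\ln(np/m)$. The main obstacle is translating this sharp but awkward expression into the clean Gaussian form $-(m-np)^2/(2np)$ asked for in the statement. The standard trick is to set $x=(np-m)/(np)\in(0,1)$ and use the elementary inequality
\[
(1-x)\ln(1-x)+x\geq \tfrac{x^2}{2},
\]
which follows, for instance, from a Taylor expansion of $f(x)=(1-x)\ln(1-x)+x-x^2/2$ around $x=0$ together with the observation that $f''(x)=-x/(1-x)\leq 0$ need not be used; one only needs $f(0)=f'(0)=0$ and $f''(x)\geq 0$ on a suitable interval, which can be verified directly. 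Multiplying through by $np$ gives the exponent $\leq -(m-np)^2/(2np)$ and completes the proof.

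I would expect the upper tail to be essentially automatic once the moment generating function is computed, while the lower tail's cosmetic cleanup — replacing a logarithmic expression by a quadratic one — is the one place where a slightly non--trivial elementary inequality is required; since this is a well--known and textbook--style statement, one could alternatively just cite \cite{AS00} and skip the derivation entirely.
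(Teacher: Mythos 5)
The paper does not actually prove Lemma~\ref{L:binomtail}: it is stated as a known result and the reader is referred to Alon and Spencer \cite[pp.~267--268]{AS00}. Your proposal, by contrast, supplies a self-contained Chernoff-style argument, which is indeed the standard derivation of these two estimates, and it is correct. For the upper tail, optimising $np(e^t-1)-tm$ at $e^t=m/(np)$ gives $m-np-m\ln(m/(np))$, which is the logarithm of $e^{m-np}(np/m)^m$, as you say. For the lower tail, with $x=(np-m)/(np)$, the optimal exponent $m-np+m\ln(np/m)$ equals $-np\bigl(x+(1-x)\ln(1-x)\bigr)$, and the elementary inequality $x+(1-x)\ln(1-x)\geq x^2/2$ converts this into the Gaussian form $-(m-np)^2/(2np)$; moreover, since $\prob(X<m)\leq\prob(X\leq m)$, establishing the bound for $\prob(X\leq m)$ suffices.

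One small correction: in your parenthetical you write $f''(x)=-x/(1-x)\leq 0$, but for $f(x)=(1-x)\ln(1-x)+x-x^2/2$ one has $f'(x)=-\ln(1-x)-x$ and $f''(x)=\frac{1}{1-x}-1=\frac{x}{1-x}\geq 0$ on $[0,1)$. This sign error does not damage the argument, since you correctly note that what is needed is $f(0)=f'(0)=0$ together with $f''\geq 0$, but the displayed value of $f''$ should be fixed. As you observe, one could also simply cite \cite{AS00}, which is precisely what the paper does.
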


The next theorem, due to Erd\H{o}s and Lov\'{a}sz \cite{EL75}, is known as the \emph{Lov\'{a}sz Local Lemma}.  It provides estimates on the probability of many events occurring simultaneously in a probability space.  For the form below, see, for example, \cite[pp 64--65]{AS00}.

\begin{theorem}\label{T:LLL}
Let $A_1, \ldots, A_n$ be events in a probability space and for each $i \in [n]$, let $\Gamma(i) \subseteq [n]$ be such that $A_i$ is mutually independent of the events $\{A_j: j \in [n] \setminus (\Gamma(i) \cup \{i\})\}$.
If there are $x_1, \ldots, x_n \in [0,1)$ such that for all $i \in
[1,n]$,
\[
\prob(A_i) < x_i \prod_{j \in \Gamma(i)}{(1-x_j)}
\]
then
\[
\prob\left(\bigcap_{i=1}^n{\bar{A}_i}\right) \geq \prod_{i=1}^n{(1-x_i)}
\]
and in particular, $\prob(\cap_{i=1}^n{\bar{A}_i})>0$.

As a special case, if there are $p \in (0,1)$ and $d \in \mathbb{Z}^+$ with the property that for each $i \in [n]$, $\prob(A_i) \leq p$, $|\Gamma(i)| \leq d$, and $p(d+1)e \leq 1$, then $\prob(\cap_{i=1}^n{\bar{A}_i})>0$.
\end{theorem}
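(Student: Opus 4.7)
The plan is to prove the main inequality by induction and then derive the special case by a direct choice of the parameters $x_i$.  The heart of the argument is the auxiliary claim that for every $i \in [n]$ and every $S \subseteq [n] \setminus \{i\}$,
\[
\prob\left(A_i \,\Big|\, \bigcap_{j \in S} \bar{A}_j\right) \leq x_i,
\]
which I would establish by induction on $|S|$.  The base case $S = \emptyset$ is immediate from the hypothesis, since $\prob(A_i) < x_i \prod_{j \in \Gamma(i)}(1-x_j) \leq x_i$.  An inductive argument also shows that each conditioning event has positive probability (using $x_i < 1$), so the conditional probabilities are always well defined.

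For the inductive step I would split $S$ into $S_1 = S \cap \Gamma(i)$ and $S_2 = S \setminus \Gamma(i)$ and write the conditional probability as the ratio
\[
\frac{\prob\bigl(A_i \cap \bigcap_{j \in S_1}\bar{A}_j \,\big|\, \bigcap_{j \in S_2}\bar{A}_j\bigr)}{\prob\bigl(\bigcap_{j \in S_1}\bar{A}_j \,\big|\, \bigcap_{j \in S_2}\bar{A}_j\bigr)}.
\]
The numerator is bounded above by $\prob(A_i \mid \bigcap_{j \in S_2}\bar{A}_j)$, which equals $\prob(A_i)$ by the mutual independence hypothesis (since $S_2 \cap (\Gamma(i) \cup \{i\}) = \emptyset$), and hence is at most $x_i \prod_{j \in \Gamma(i)}(1-x_j)$.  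For the denominator, enumerating $S_1 = \{j_1,\ldots,j_r\}$ and applying the chain rule, each factor
\[
\prob\left(\bar{A}_{j_k} \,\Big|\, \bar{A}_{j_1} \cap \cdots \cap \bar{A}_{j_{k-1}} \cap \bigcap_{j\in S_2}\bar{A}_j\right)
\]
is bounded below by $1 - x_{j_k}$ using the inductive hypothesis, since its conditioning set has size $(k-1) + |S_2| < r + |S_2| = |S|$.  Dividing then yields the bound $x_i$, since $\prod_{j \in S_1}(1-x_j) \geq \prod_{j \in \Gamma(i)}(1-x_j)$.

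With the auxiliary claim in hand, the main conclusion follows from one final application of the chain rule,
\[
\prob\left(\bigcap_{i=1}^n \bar{A}_i\right) = \prod_{i=1}^n \prob\left(\bar{A}_i \,\Big|\, \bar{A}_1 \cap \cdots \cap \bar{A}_{i-1}\right) \geq \prod_{i=1}^n (1 - x_i),
\]
which is strictly positive since each $x_i \in [0,1)$.  For the special case with uniform bounds $p$ and $d$, I would set $x_i = 1/(d+1)$ for every $i$ and verify directly that
\[
x_i \prod_{j \in \Gamma(i)}(1 - x_j) \geq \frac{1}{d+1}\left(1 - \frac{1}{d+1}\right)^{d} \geq \frac{1}{e(d+1)} \geq p \geq \prob(A_i),
\]
using the standard bound $(1 - 1/(d+1))^{d} \geq 1/e$.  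A vanishing perturbation of the $x_i$ (or the observation that the induction goes through with non-strict inequality as long as $x_i < 1$) yields the hypothesis of the main statement, whence $\prob(\bigcap \bar{A}_i) > 0$.

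The main obstacle is the careful bookkeeping in the inductive step: one must ensure that the inductive hypothesis is applied only to conditioning sets of size strictly less than $|S|$, which is why the denominator is processed one event at a time by the chain rule rather than in bulk; simultaneously, the mutual independence property in the numerator must be invoked with respect to $S_2$ alone, since exactly those events lie outside $\Gamma(i)$.  Once this split is set up correctly, the remainder of the argument is a straightforward manipulation of conditional probabilities.
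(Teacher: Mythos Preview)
Your proof is correct and follows the standard inductive argument for the Lov\'asz Local Lemma (exactly as in, e.g., Alon--Spencer).  Note, however, that the paper does not actually prove this theorem: it is stated as a known tool with references to Erd\H{o}s--Lov\'asz and Alon--Spencer, so there is no ``paper's own proof'' to compare against.  Your write-up would serve perfectly well as a self-contained proof if one were desired.
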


The following inequality, due to McDiarmid and Reed \cite{McR06}, is a variation of a concentration result by Talagrand \cite{mT95}.  For further details on the Talagrand inequality see, for example, Talagrand \cite{mT96}.  

\begin{theorem}\label{T:tal}
Fix $c >0$, $r\geq 0$ and $d\geq 0$. Suppose that $g$ is a non-negative random variable with mean $\mu$ and $g=g(X_1, \ldots, X_n)$ where $X_1, \ldots, X_n$ are independent Bernoulli 0-1 random variables and
\begin{enumerate}[(a)]
	\item if $\mathbf{x}, \mathbf{x}' \in \{0,1\}^n$ differ in exactly one coordinate, then $|g(\mathbf{x})-g(\mathbf{x}')| \leq c$ and
	\item for any $s \geq 0$, if $g(\mathbf{y}) \geq s$, there is a set $I \subseteq [1,n]$ with $|I| \leq rs +d$ such that if $\mathbf{y}' \in \{0,1\}^n$ agrees with $\mathbf{y}$ on the coordinates in $I$, then $g(\mathbf{y}') \geq s$.
\end{enumerate}
Then, for any $t \geq 0$
\begin{align*}
\prob(g - \mu \geq t) 	& \leq e^{-\frac{t^2}{2c^2(r\mu + d + rt)}}\\
\prob(g - \mu \leq -t) 	& \leq e^{-\frac{t^2}{2c^2(r\mu + d + t/3c)}}.
\end{align*}
\end{theorem}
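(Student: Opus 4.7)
The plan is to derive both tail bounds from Talagrand's classical convex-distance inequality: for any $A \subseteq \{0,1\}^n$ under product measure $\prob$ and any $u \geq 0$,
\[
\prob(A)\,\prob\bigl(\{\mathbf{x} : d_T(\mathbf{x}, A) \geq u\}\bigr) \leq e^{-u^2/4},
\]
where $d_T(\mathbf{x}, A) = \sup_{\|\alpha\|_2 \leq 1} \inf_{\mathbf{y} \in A} \sum_{i : x_i \neq y_i} \alpha_i$ is the Talagrand convex distance. Hypotheses (a) and (b) are to be used only to produce lower bounds on $d_T$; the tail estimates then follow by choosing the threshold and set $A$ carefully.

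The central geometric lemma I would establish first is: if $g(\mathbf{x}) \geq s + u$ and $A = \{\mathbf{y} : g(\mathbf{y}) \leq s\}$, then $d_T(\mathbf{x}, A) \geq u/\bigl(c\sqrt{r(s+u) + d}\bigr)$. The argument invokes (b) at threshold $s+u$ to obtain a certifying set $I \subseteq [n]$ with $|I| \leq r(s+u)+d$ for $\mathbf{x}$. For any $\mathbf{y} \in A$, define $\mathbf{y}'$ to agree with $\mathbf{x}$ on $I$ and with $\mathbf{y}$ on $[n]\setminus I$; condition (b) forces $g(\mathbf{y}') \geq s+u$, while condition (a) gives $u \leq g(\mathbf{y}') - g(\mathbf{y}) \leq c\,|I \cap \{i : x_i \neq y_i\}|$. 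Hence $\mathbf{x}$ and $\mathbf{y}$ disagree on at least $u/c$ coordinates inside $I$. Taking $\alpha_i = |I|^{-1/2}$ for $i \in I$ and $0$ otherwise yields a unit vector realising the claimed lower bound on $d_T(\mathbf{x}, A)$, uniformly in $\mathbf{y} \in A$.

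Plugging this lemma into Talagrand's inequality with $s$ the median $M$ of $g$ (so $\prob(A) \geq 1/2$) gives the upper-tail estimate $\prob(g \geq M + t) \leq 2\exp\bigl(-t^2/(4c^2(rM + rt + d))\bigr)$. A parallel but more delicate argument is needed for the lower tail. With $A = \{\mathbf{y} : g(\mathbf{y}) \geq M\}$ and $\mathbf{x}$ satisfying $g(\mathbf{x}) \leq M - t$, condition (b) supplies a certificate $I(\mathbf{y})$ for each $\mathbf{y} \in A$ individually, but the Talagrand distance demands a single $\alpha$ that certifies the distance against every $\mathbf{y} \in A$ simultaneously. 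I would handle this through the convex-hull formulation of $d_T$: $d_T(\mathbf{x}, A)^2$ equals the squared Euclidean distance from the origin to the convex hull of the $0/1$ disagreement vectors $V(\mathbf{x}, \mathbf{y})$ with $\mathbf{y} \in A$. The construction above shows each $V(\mathbf{x}, \mathbf{y})$ concentrates at least $t/c$ of its mass inside some set of size $\leq rM + d$, and a separating-hyperplane / Cauchy--Schwarz argument then pushes the whole convex hull away from the origin. This asymmetry is the main obstacle, because the certifiability hypothesis (b) is one-sided: it produces upward certificates but no symmetric downward ones.

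Finally, I would convert from median $M$ to mean $\mu$ by bounding $|M - \mu| = O\bigl(c\sqrt{r\mu + d}\bigr)$, a bound obtained by integrating the very tail estimates just derived in terms of $M$, and absorbing this shift into the constants appearing in the exponents. The asymmetric $rt$ versus $t/(3c)$ correction in the stated bounds reflects that in the upper tail the certifying set has size governed by the target value $s+u = M + t$, whereas in the lower tail the certifying set sits at level $M$ and the $t$-dependence enters only through the Lipschitz disagreement count $t/c$; careful bookkeeping of these two effects, combined with the median-to-mean shift and the choice of Talagrand constant, yields the precise constants $2c^2(r\mu + d + rt)$ and $2c^2(r\mu + d + t/(3c))$ in the final inequalities.
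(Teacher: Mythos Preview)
The paper does not prove this theorem at all: it is stated in Section~2 as one of three standard probabilistic tools and attributed to McDiarmid and Reed~\cite{McR06}, with no argument given. So there is no ``paper's own proof'' to compare your proposal against; the authors simply quote the result and use it as a black box in the proof of Lemma~4.2.

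That said, your outline is broadly the classical route to results of this type. The upper-tail half is essentially complete: the certifying set from hypothesis~(b) gives a good weight vector $\alpha$ supported on $I$, and Talagrand's convex-distance inequality does the rest. Your treatment of the lower tail, however, is where the real work lies and where your sketch is thinnest. You correctly identify that certifiability is one-sided, so that for $A=\{g\geq M\}$ the certificate $I(\mathbf{y})$ depends on $\mathbf{y}$ and there is no single $\alpha$ that works uniformly. Invoking the convex-hull formulation of $d_T$ is the right move, but the sentence ``a separating-hyperplane / Cauchy--Schwarz argument then pushes the whole convex hull away from the origin'' is precisely the step that needs to be made explicit: one must show that every convex combination $\sum_j \lambda_j V(\mathbf{x},\mathbf{y}_j)$ has $\ell^2$-norm at least $t/(c\sqrt{rM+d})$, and this does not follow immediately from each $V(\mathbf{x},\mathbf{y}_j)$ individually having large mass on a small set, since those sets vary with $j$. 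The median-to-mean conversion and the bookkeeping that produces the exact denominators $2c^2(r\mu+d+rt)$ and $2c^2(r\mu+d+t/(3c))$ are also nontrivial; getting the constant $2$ rather than $4$ and the clean additive corrections requires more care than ``absorbing the shift into the constants''. If you want the statement with these precise constants, you should consult McDiarmid--Reed directly, where the argument is organised around self-bounding functions rather than the raw convex-distance inequality.
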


\section{Vertices of low degree}\label{S:lowdeg}

Since the vertices of low degree, as in Definition \ref{D:lowdeg-highdeg}, in a graph $G$ have relatively few neighbours compared to $\Delta(G)$, any total colouring of $G$ with more than $\Delta(G)$ colours can be adjusted by recolouring some vertices so that every vertex of $V_{\ell}$ is distinguished from all of its neighbours.  Recall that since $\chi''(G) \geq \Delta(G) +1$, if $\varphi$ is a proper total $k$-colouring of $G$, then $k \geq \Delta(G) + 1$.

\begin{proposition}\label{P:lowdeg}
Let $G = (V,E)$ be a graph and let $\varphi$ be a proper total colouring of $G$.  There exists a proper total colouring $\varphi'$ of $G$ with $\varphi'|_{E \cup V_h} = \varphi|_{E \cup V_h}$ such that for every $v \in V_{\ell}$, the colouring $\varphi'$ distinguishes $v$ from each of its neighbours.
\end{proposition}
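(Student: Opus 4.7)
The plan is to process the vertices of $V_\ell$ one at a time in an arbitrary order, assigning $\varphi'(v)$ greedily from the palette of colours used by $\varphi$. Writing $k$ for the number of colours of $\varphi$, we have $k \geq \Delta(G) + 1$ since a proper total colouring uses at least $\Delta(G) + 1$ colours. I will show that at each step the set $B(v)$ of colours forbidden for $\varphi'(v)$ has size at most $2\deg(v) \leq \Delta(G) < k$, so that a valid assignment always exists.

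A key observation is that distinguishing a vertex $v \in V_\ell$ from a neighbour $u \in V_h$ is automatic: the cardinalities $|C_{\varphi'}(v)| = \deg(v) + 1 \leq \lfloor \Delta(G)/2 \rfloor + 1 \leq \deg(u) < |C_{\varphi'}(u)|$ differ, so the colour sets must differ. Hence only the neighbours of $v$ inside $V_\ell$ contribute non-trivially to the distinguishing constraints.

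When processing $v$, the forbidden set $B(v)$ consists of: (a) the edge-colour set $E_v$, which keeps $\varphi'$ a proper total colouring; (b) the colour $\varphi'(u)$ for every neighbour $u$ whose colour is already fixed (every $u \in N(v) \cap V_h$ together with every earlier-processed $u \in N(v) \cap V_\ell$); and (c) for each such fixed $u \in V_\ell$ with $\deg(u) = \deg(v)$ and $E_v \subset C_{\varphi'}(u)$, the unique element $c(u,v)$ of $C_{\varphi'}(u) \setminus E_v$, whose choice as $\varphi'(v)$ would force $C_{\varphi'}(v) = C_{\varphi'}(u)$. Any $\varphi'(v) \notin B(v)$ preserves proper totality and distinguishes $v$ from every currently-coloured neighbour; the remaining distinguishing constraints are enforced when the later neighbours are themselves processed.

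The heart of the argument is the claim that each fixed neighbour $u$ of $v$ contributes at most one colour to $B(v) \setminus E_v$. Both potential contributions, $\varphi'(u)$ and $c(u,v)$, lie in $C_{\varphi'}(u) = \{\varphi'(u)\} \cup E_u$. A short case analysis settles it: if $\varphi'(u) \in E_v$ then $\varphi'(u)$ is absorbed into $E_v$ and, when defined, $c(u,v)$ contributes the single element of $E_u \setminus E_v$; if $\varphi'(u) \notin E_v$ then the conditions $\deg(u) = \deg(v)$ and $E_v \subset C_{\varphi'}(u)$ combined with $|E_v| = |E_u|$ force $E_v = E_u$, so $c(u,v) = \varphi'(u)$ and there is again only one contribution. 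Summed over at most $\deg(v)$ fixed neighbours, this yields $|B(v) \setminus E_v| \leq \deg(v)$, whence $|B(v)| \leq 2\deg(v) \leq \Delta(G) < k$, and the greedy step goes through. The main obstacle is precisely this combinatorial identity; everything else is bookkeeping.
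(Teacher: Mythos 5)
Your proof is correct and uses essentially the same key counting as the paper: each neighbour of a vertex $v \in V_\ell$ contributes at most one forbidden colour beyond the edge colours $E_v$, giving $|B(v)| \leq 2\deg(v) \leq \Delta(G) < k$, and the case split on whether $\varphi'(u) \in E_v$ mirrors the paper's observation that if the distinguishing-bad colour $i_v$ is not $\psi_0(v)$ then $\psi_0(v)$ must already be an edge colour at $u$. The only difference is packaging: you process the $V_\ell$ vertices greedily in an arbitrary order, whereas the paper picks an extremal colouring (one minimising the number of undistinguished low-degree vertices) and shows any bad vertex could be recoloured to decrease the count; these are interchangeable here since the same local estimate drives both.
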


\begin{proof}
Fix a graph $G$ and $\varphi$, a proper total $k$-colouring of $G$.  Let $\psi_0$ be a proper total $k$-colouring of $G$ with the property that among the proper total $k$-colourings of $G$ that agree with $\varphi$ on $E \cup V_h$, the map $\psi_0$ has the fewest vertices in $V_{\ell}$ not distinguished from one of its neighbours.  More precisely, among the total colourings
\[
\{\psi: \psi \text{ is a proper total $k$-colouring of $G$ with } \psi|_{E\cup V_h} = \varphi|_{E \cup V_h}\}
\]
$\psi_0$ is such that the quantity 
\[
|\{u \in V_{\ell}: \exists\ v \in N(u) \cap V_{\ell} \text{ with } C_{\psi_0}(u) = C_{\psi_0}(v)\}|
\]
is minimised.  It will be shown that, in fact, every vertex in $V_{\ell}$ is distinguished from all of its neighbours with respect to $\psi_0$.  Note that every vertex $v \in V_{\ell}$ is distinguished from every $u \in N(v) \cap V_h$ since $|C_{\psi_0}(v)|<|C_{\psi_0}(u)|$.

Suppose that there is a $u \in V_{\ell}$ not distinguished by $\psi_0$ from one of its neighbours.  The vertex $u$ will be recoloured so that the resulting total colouring is both proper and distinguishes $u$ from all of its neighbours.

If $i \in [k]$ is such that there is $v \in N(u)$ with $C_{\psi_0}(v) = \{i\} \cup C_{\psi_0}(u) \setminus \{\psi_0(u)\}$ and $i \neq \psi_0(v)$ then $\psi_0(v) \in \{\psi_0(uw): w \in N(u)\}$.  So, for every $v \in N(u)$, there is at most one colour $i_v \in [k] \setminus \{\psi_0(uw): w \in N(u)\}$ such that either $i_v = \psi_0(v)$ or $C_{\psi_0}(v) = \{i_v\} \cup C_{\psi_0}(u) \setminus \{\psi_0(u)\}$.

\begin{figure}[h]
	\centerline{\includegraphics{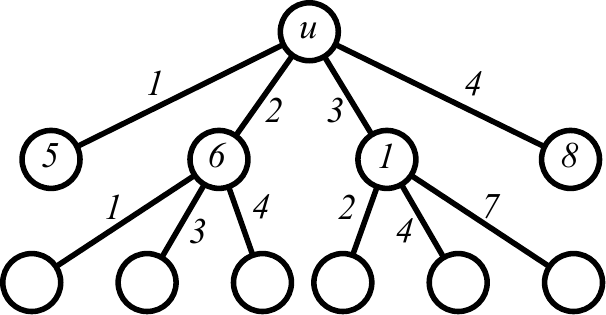}}
	\caption{Example of colours unavailable for vertex $u$}\label{F:lowdeg}
\end{figure}

Thus
\begin{align*}
\Big\vert \bigcup_{w \in N(u)}&\{\psi_0(w), \psi_0(uw)\}\\
	 & \hspace{-0.25in}\bigcup \{i \in [k]: \exists\ v 		\in N(u) \text{ with } C_{\psi_0}(v) = \{i\}\cup C_{\psi_0}(u) 			\setminus \{\psi_0(u)\}\} \Big\vert\\
		&\leq 2 \deg(u) \leq 2 \Delta/2 < k.
\end{align*}
Therefore, there is at least one colour $i_u \in [k]$ such that if $v \in N(u)$, then $i_u \neq \psi_0(v)$, $i_u \neq \psi_0(uv)$ and $C_{\psi_0}(v) \neq \{i_u\} \cup \{\psi_0(uw): w \in N(u)\}$.

Define $\psi_1: V\cup E \to [k]$ for each $x \in V\cup E$ by 
\[
\psi_1(x) = 
	\begin{cases}
		i_u 			&\text{if $x = u$},\\
		\psi_0(x)	&\text{otherwise}.
	\end{cases}
\]
Then, $\psi_1$ is a proper total colouring of $G$ with $k$ colours, $\psi_1|_{E \cup V_h} = \psi_0|_{E \cup V_h} = \varphi|_{E \cup V_h}$ and $\psi_1$ has fewer vertices in $V_{\ell}$ that are not distinguished from one of its neighbours than $\psi_0$ does.

Thus, $\psi_0$ is a proper total $k$-colouring of $G$ with $\psi_0|_{E \cup V_h} = \varphi|_{E \cup V_h}$ and for every $u \in V_{\ell}$ and $v \in N(u)$, $C_{\psi_0}(u) \neq C_{\psi_0}(v)$.
\end{proof}

\section{Vertices of high degree}\label{S:highdeg}

\begin{proposition}\label{P:highdeg}
There exists $\Delta_0 > 0$ and $C_1 > 0$ such that for every graph $G$ with $\Delta(G) \geq \Delta_0$ and $\varphi$, a proper total $k$-colouring of $G$, there is a proper total $(k+C_1)$-colouring, $\varphi'$, of $G$ such that for every $u,v \in V_h$, if $uv \in E$, then $C_{\varphi'}(u) \neq C_{\varphi'}(v)$.
\end{proposition}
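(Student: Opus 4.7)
The plan is to build $\varphi'$ by perturbing $\varphi$: add $C_1$ new colours to the palette (where $C_1$ is a constant to be fixed) and, independently for each edge $e$, with probability $p$ (a parameter depending on $\Delta$, to be chosen) recolour $e$ by sampling a uniformly random colour from the $C_1$ new colours, leaving $\varphi'(e) = \varphi(e)$ otherwise. Vertex colours are not altered. The Lov\'{a}sz Local Lemma (Theorem~\ref{T:LLL}) will then show that, with positive probability, $\varphi'$ is a proper total colouring that distinguishes every pair of adjacent vertices in $V_h$. This follows the strategy of Hatami~\cite{hH05} for the edge-colouring analogue.

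The bad events split into two families. For each pair $(e, e')$ of adjacent edges, let $A_{e,e'}$ be the event $\varphi'(e) = \varphi'(e')$; since $\varphi$ is already proper and only new colours are ever introduced, this can happen only if both $e$ and $e'$ are recoloured to the same new colour, so $\prob(A_{e,e'}) \leq p^2/C_1$. For each edge $uv$ with $u, v \in V_h$, let $B_{uv}$ be the event $C_{\varphi'}(u) = C_{\varphi'}(v)$. Because $|C_{\varphi'}(w)| = \deg(w) + 1$ in any proper total colouring, $\prob(B_{uv}) = 0$ whenever $\deg(u) \neq \deg(v)$, so the real work is to bound $\prob(B_{uv})$ when $\deg(u) = \deg(v) = d > \Delta/2$.

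For this bound I would decompose $B_{uv} = \bigcap_c E_c$ with $E_c := \{c \in C_{\varphi'}(u) \Leftrightarrow c \in C_{\varphi'}(v)\}$ and analyse each colour's contribution. The key observation is that for each old colour $c \in [k]$ the event $E_c$ depends only on whether the (at most one) edge of colour $c$ at $u$ and the (at most one) edge of colour $c$ at $v$ are recoloured: for $c \in C_\varphi(u) \cap C_\varphi(v) \setminus \{\varphi(u), \varphi(v), \varphi(uv)\}$, $\prob(E_c) = (1-p)^2 + p^2 = 1 - 2p(1-p)$; for $c$ in the symmetric difference and not equal to $\varphi(u)$ or $\varphi(v)$, $\prob(E_c) = p$; for $c = \varphi(u)$, $\prob(E_c) = 1 - p$ if $\varphi(u) \in C_\varphi(v)$ and $0$ otherwise, and similarly for $c = \varphi(v)$. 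Since different old colours correspond to disjoint edges, these events are mutually independent, and their product is over $\Omega(\Delta)$ factors each at most $1 - \Omega(p)$, giving at most $\exp(-\Omega(p\Delta))$. Conditioning next on the set of recoloured edges, the events for the $C_1$ new colours are determined by the (then independent) uniform colour choices on recoloured edges and can only strengthen the bound. Choosing $p = \Theta(1/\sqrt{\Delta})$ yields $\prob(B_{uv}) \leq \exp(-\Omega(\sqrt{\Delta}))$, which is comfortably smaller than any polynomial in $1/\Delta$.

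Each $A_{e,e'}$ depends on the random colours of only $O(\Delta)$ other edges, and each $B_{uv}$ on $O(\Delta^2)$, so Theorem~\ref{T:LLL} (in its asymmetric form, since the two event families have very different probabilities and dependency degrees) applies with $p = \Theta(1/\sqrt{\Delta})$ and $C_1$ a sufficiently large absolute constant, yielding positive probability that no bad event occurs. The main obstacle is the bound on $\prob(B_{uv})$: the old-colour decomposition is clean, but one must carefully verify that the new-colour events, whose randomness is coupled across colours via the shared choice of ``recoloured or not'' on each edge, do not spoil the decay; and $p$ must be tuned so that both $\prob(A_{e,e'}) \cdot O(\Delta)$ and $\prob(B_{uv}) \cdot O(\Delta^2)$ stay below the LLL threshold simultaneously.
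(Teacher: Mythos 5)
There is a genuine gap in the Lov\'asz Local Lemma step, and it is precisely the interaction between the two families of bad events that the final sentence of your proposal waves at but does not resolve. The event $B_{uv}$ (or the weakened version $\bigcap_{c\ \text{old}} E_c$) depends on the recolouring indicators of the $\Theta(\Delta)$ edges incident to $u$ or $v$, and each such edge lies in $\Theta(\Delta)$ adjacent pairs, so $B_{uv}$ is dependent on $\Theta(\Delta^2)$ of the events $A_{e,e'}$. In the asymmetric LLL condition for $B_{uv}$ one therefore picks up a factor $(1-x_A)^{\Theta(\Delta^2)} \approx e^{-\Theta(x_A\Delta^2)}$, and to keep this comparable to $\prob(B_{uv}) = e^{-\Theta(p\Delta)}$ you are forced to take $x_A = O(p/\Delta)$. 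Feeding that back into the condition for $A_{e,e'}$ gives $p^2/C_1 = \prob(A_{e,e'}) \leq x_A = O(p/\Delta)$, hence $C_1 = \Omega(p\Delta)$; with your choice $p = \Theta(1/\sqrt{\Delta})$ this is $C_1 = \Omega(\sqrt{\Delta})$, not a constant. Lowering $p$ to $\Theta(1/\Delta)$ fixes the $A$-condition but then $\prob(B_{uv}) = e^{-\Theta(1)}$ is a constant, forcing $x_B$ to be a constant and killing the factor $(1-x_B)^{\Theta(\Delta^2)}$. No choice of $p$ lets you keep $C_1$ bounded while treating properness as a random bad event.

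The paper sidesteps this entirely: it does not put new colours on edges at random. It chooses the set of edges to recolour with $p = \Theta(1/\Delta)$, then deterministically truncates that random set so every vertex meets at most $M = O(1)$ chosen edges, and finally recolours the chosen subgraph \emph{deterministically} with Vizing's theorem using $M+O(1)$ fresh colours, so properness never becomes a probabilistic event and there is no $A_{e,e'}$ family at all. The price is that with $p = \Theta(1/\Delta)$ a vertex is only incident to $O(1)$ recoloured edges in expectation, so not every pair $u,v\in V_h$ is separated by the first random step; the paper's Lemma~\ref{L:highdegA} only guarantees separation for vertices incident to at least $m$ recoloured edges, and those with fewer are shown to be sparse and are handled by a second deletion step (Lemma~\ref{L:highdegB}). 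Your instinct to decompose $B_{uv}$ colour-by-colour and your bound $\prob(B_{uv}) \leq e^{-\Omega(p\Delta)}$ are sound, but you should switch to the smaller $p$, bound the degree of the random subgraph, and recolour it via Vizing rather than at random.
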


This is proved in two steps.  First it is shown that there is a set of edges in $G$ that can be deleted so that, in the resulting subgraph, if a vertex is not incident to too few deleted edges, it is distinguished from its neighbours and so that every vertex has relatively few neighbours that were potentially not distinguished from some neighbour.

\begin{lemma}\label{L:highdegA}
Fix $m, d\in \mathbb{Z}^+$ with $m \geq d+4$, and $\varepsilon > 0$.  There exists $M > 0$ and $\Delta_2 > 0$ such that for every graph $G$ with $\Delta(G) \geq \Delta_2$ and $\varphi$, a proper total $k$-colouring of $G$, there is a set $E_1 \subseteq E(V_h, V)$ such that for each $v \in V$, $\deg_{E_1}(v) \leq M$, and setting $\varphi_1 = \varphi|_{V \cup E \setminus E_1}$,
\begin{enumerate}[(a)]
	\item for $u, v \in V_h$ with $uv \in E$ and $\deg(u) = \deg(v)$, if $\deg_{E_1}(u) \geq m$, then $|C_{\varphi_1}(u) \triangle C_{\varphi_1}(v)| \geq d$ and
	\item if $v \in V_h$, then $|\{u \in N_G(v): \deg_{E_1}(u) < m\}| \leq \varepsilon \Delta(G)$.
\end{enumerate}
\end{lemma}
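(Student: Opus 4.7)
The plan is to choose $E_1$ by a random process and verify both conclusions simultaneously with the Lov\'{a}sz Local Lemma (Theorem~\ref{T:LLL}). Concretely, I would include each edge of $E(V_h, V)$ in $E_1$ independently with probability $p = C m / \Delta(G)$, where $C$ is a large constant (depending on $m$, $d$, $\varepsilon$) chosen so that $\expect[\deg_{E_1}(u)] > 3m$ for every $u \in V_h$. The bad events I would track are: for each $v \in V$, the event $D_v$ that $\deg_{E_1}(v) > M$; for each $v \in V_h$, the event $B_v$ that the count appearing in (b) exceeds $\varepsilon\Delta(G)$; and for each ordered pair $(u,v)$ with $uv \in E$, $u, v \in V_h$ and $\deg(u) = \deg(v)$, the event $A_{uv}$ that $\deg_{E_1}(u) \geq m$ but $|C_{\varphi_1}(u) \triangle C_{\varphi_1}(v)| < d$.

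The binomial tail estimates in Lemma~\ref{L:binomtail} handle the first two families. For $D_v$, the upper tail applied to the binomial random variable $\deg_{E_1}(v)$ with mean at most $p\Delta(G) = Cm$ gives $\prob(D_v) \leq e^{-\Omega(M)}$ once $M$ is taken a large enough constant multiple of $Cm$. For $B_v$, the lower tail gives $\prob(\deg_{E_1}(u) < m) \leq e^{-\Omega(Cm)}$ for every $u$ with $\deg(u) > \Delta(G)/2$, which by choosing $C$ large can be forced below $\varepsilon/2$; linearity then makes the expected count at most $\varepsilon\Delta(G)/2$, and Talagrand's inequality (Theorem~\ref{T:tal}), with Lipschitz constant $1$, concentrates the count enough to give $\prob(B_v) \leq e^{-\Omega(\Delta(G))}$.

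For $A_{uv}$, I would write $|C_{\varphi_1}(u) \triangle C_{\varphi_1}(v)|$ as a sum of $0$--$1$ indicators, one per colour $c \in C_\varphi(u) \cup C_\varphi(v)$. Because $\varphi$ is proper, each colour $c$ in this union corresponds to at most one edge at $u$ and at most one edge at $v$, so distinct indicators depend on disjoint Bernoulli variables and are therefore independent. A short case analysis by the type of colour shows $\expect[|C_{\varphi_1}(u) \triangle C_{\varphi_1}(v)|] \geq 2p(1-p)(\deg(u) + 1) = \Omega(m)$, the worst case being $C_\varphi(u) = C_\varphi(v)$ (when the whole contribution comes from the shared colours), and a Chernoff tail bound then yields $\prob(A_{uv}) \leq e^{-\Omega(m)}$. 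Each of $D_v$, $B_v$, $A_{uv}$ depends only on the edges incident to a bounded number of vertices, so it has at most $O(\Delta(G)^2)$ neighbours in the LLL dependency graph; applying the symmetric form of Theorem~\ref{T:LLL}, with $M$ and $\Delta_2$ chosen appropriately for the fixed $m, d, \varepsilon$, then completes the proof.

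The hardest step is the bound on $\prob(A_{uv})$. Since $\expect[|C_{\varphi_1}(u) \triangle C_{\varphi_1}(v)|]$ can be as small as $\Omega(m)$---a constant in $\Delta(G)$---the direct concentration bound $e^{-\Omega(m)}$ is itself only a constant in $\Delta(G)$ and does not obviously beat the $O(\Delta(G)^2)$ dependency degree in the LLL. Closing the gap requires a more refined argument: condition on $\deg_{E_1}(u) = k$ for each $k \geq m$, exploit the essentially independent randomness at $v$, and use the identity $|C_{\varphi_1}(u) \triangle C_{\varphi_1}(v)| \geq |\deg_{E_1}(u) - \deg_{E_1}(v)|$ to argue that only the narrow regime with $\deg_{E_1}(v)$ within $d$ of $\deg_{E_1}(u)$ can possibly contribute, and that even in that regime the hypothesis $m \geq d + 4$ together with the binomial fluctuations of $\deg_{E_1}(v)$ force $|C_{\varphi_1}(u) \triangle C_{\varphi_1}(v)| \geq d$ with high enough probability. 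Once this sharper estimate is in hand, fitting the constants $M$ and $\Delta_2$ to the LLL inequality is routine bookkeeping.
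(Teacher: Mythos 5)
Your high-level plan is the same as the paper's: include edges of $E(V_h,V)$ independently with probability $p = \Theta(1/\Delta)$ and invoke the Lov\'asz Local Lemma, with Talagrand controlling the event in part (b). You also correctly identify the estimate on $\prob(A_{uv})$ as the crux. However, neither version of your argument for $A_{uv}$ actually produces a bound small enough for the Local Lemma, and this is a genuine gap, not just a detail to be filled in.

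The dependency degree of $A_{uv}$ in the LLL is polynomial in $\Delta$ (the paper uses $\Delta^4$), so you need $\prob(A_{uv}) = O(\Delta^{-c})$ for some $c \geq 4$ or so. Your Chernoff bound gives only $e^{-\Omega(m)}$, a constant, as you note. But the ``refined argument'' you sketch does not repair this: conditioning on $\deg_{E_1}(u)=k$ and using $|C_{\varphi_1}(u) \triangle C_{\varphi_1}(v)| \geq |\deg_{E_1}(u)-\deg_{E_1}(v)|$ only forces $\deg_{E_1}(v)$ into a window of width $2d$ around $k$, and the probability that a binomial with mean $\Theta(m)$ lands in such a window is $\Theta(d/\sqrt{m})$, again a constant in $\Delta$. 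Reasoning purely at the level of degree counts cannot see the needed $\Delta$-dependence. The paper's argument works at the level of specific colours: it conditions on the exact set $D$ of colours that $E_1$ removes from $C_{\varphi}(u)$ (with $|D|\geq m$), and then shows by a counting argument that for $|C_{\varphi_1}(u) \triangle C_{\varphi_1}(v)| < d$ to hold, at least $m-d+1$ particular colours must be removed from $C_{\varphi}(v)$; each such removal is the inclusion of one specific edge at $v$ in $E_1$, contributing a factor of $p$. After summing over the $\leq 2^{2M+d}$ possible choices of which colours these are, one gets $\prob(A_{uv}) \leq 2^{2M+d} p^{m-d+1} = O(\Delta^{-(m-d+1)})$, and the hypothesis $m \geq d+4$ gives $m-d+1 \geq 5 > 4$, which is exactly what is needed to beat the dependency degree. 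This conditioning-on-colour-sets step is the missing idea.

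A secondary issue: treating the cap $\deg_{E_1}(v) \leq M$ as a bad event $D_v$ to be avoided by the LLL does not work, because $\prob(D_v)$ is bounded below by a positive constant independent of $\Delta$ (it is the tail of a binomial with mean $\Theta(m)$ exceeding a fixed $M$), whereas $D_v$ depends on the status of edges near $v$ and so has dependency degree polynomial in $\Delta$; no valid choice of weights $x_i$ in Theorem~\ref{T:LLL} accommodates a constant-probability event with $\Delta$-many constant-weight neighbours. The paper sidesteps this by a deterministic truncation: it first samples $X$ and then sets $E_1 = X \setminus \{uv : \deg_X(u) > M\}$, so the degree cap holds with probability one and never enters the LLL; the price is that the analysis of $B_v$ then has to track vertices that lose edges in the truncation, which is why the paper splits into the sets $V_m$, $V_M$, $V_N$.
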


\begin{proof}
Let $G$ be a graph and set $\Delta(G) = \Delta$.  Set $\lambda = 2(1+ \sqrt{2})(m + \ln(3/\varepsilon))$ and $M = 2 e \lambda$.

Set $p = \lambda/\Delta$ and select $X \subseteq E(V_h,V)$ randomly, with each edge in $E(V_h, V)$ included in $X$ independently with probability $p$.

Set $E_1 = E_1(X) = X \setminus \{uv \in E: \deg_{X}(u) > M\}$ so that every vertex is contained in at most $M$ edges from $E_1$.  

\begin{figure}[h]
	\centerline{\includegraphics{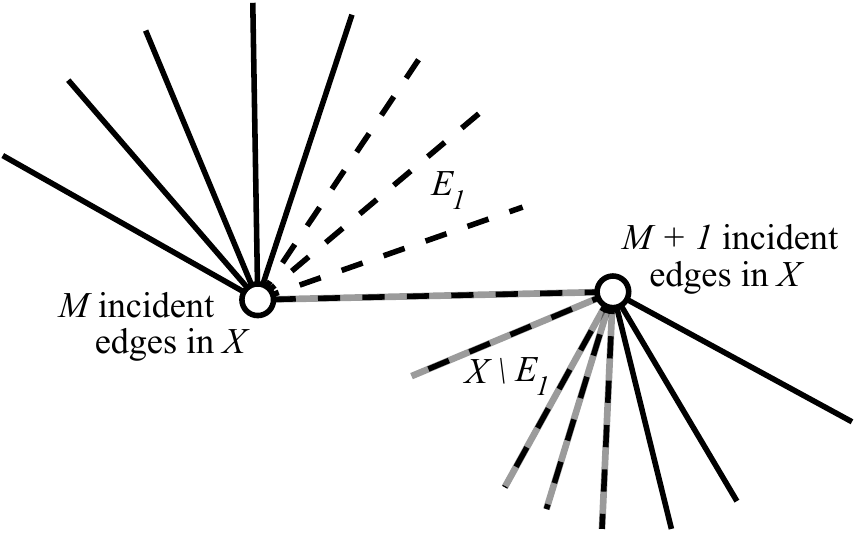}}
	\caption{Edges contained in $X$ and $E_1$.}
	\label{F:deletededges}
\end{figure}

For every $v \in V_h$ and $u \in N(v) \cap V_h$, define the following events, depending on the randomly chosen set of edges $X$:
\begin{align*}
A_{uv}	&= \left\{\deg_{E_1}(u) \geq m \text{ and } |C_{\varphi_1}(u) \triangle C_{\varphi_1}(v)| < d\right\}\\
B_v 		&= \left\{|\{u \in N_G(v): \deg_{E_1}(u) > m\}| > \Delta \varepsilon\right\}.
\end{align*}

The Lov\'{a}sz Local Lemma (Theorem \ref{T:LLL}) is used to show that
\[
\prob\left(\bigcap_{uv \in E(V_h, V_h)}\overline{A_{uv}} \cap \bigcap_{v \in V_h}\overline{B_{v}}\right) > 0.
\]
That is, with positive probability, the set $E_1$ satisfies both conditions $(a)$ and $(b)$.  For this, estimates on $\prob(A_{uv})$ and $\prob(B_v)$ are required.\\

\noindent \emph{Claim:} For each $v \in V_h$ and $u \in N(v) \cap V_h$ if $\deg(v) = \deg(u)$, then $\prob(A_{uv}) \leq 2^{2M + d} p^{m-d+1}$.\\

\noindent \emph{Proof of Claim:} Fix $u,v \in V_h$ with $uv \in E$ and $\deg(u) = \deg(v)$.  In order to estimate $\prob(A_{uv})$, it is convenient to condition on the following event.  For any $D \subseteq C_{\varphi}(u) \setminus \{\varphi(u)\}$ with $m \leq |D| \leq M$, let $Z_D$ be the event that $\varphi[\{uw \in E_1: w \in N(u)\}] = D$.  That is, $Z_D$ is the event that $D$ is the set of colours contributed to $C_{\varphi}(u)$ by $E_1$.  Given a set $D$, set $t = t(D) = |D \setminus C_{\varphi}(v)|$ and $\ell = \ell(D) = |D \cap C_{\varphi}(v)|$.

Fix such a set $D$, let $t = t(D)$ and $\ell = \ell(D)$ and suppose that $Z_D$ holds. Set $s = |C_{\varphi}(u) \setminus C_{\varphi}(v)|$.  Since $\deg(u) = \deg(v)$, then $|C_{\varphi}(v) \setminus C_{\varphi}(u)| = s$ also.

Since $s-t \leq |C_{\varphi_1}(u) \setminus C_{\varphi}(v)|$, if $s-t \geq d$, then
\[
\prob(|C_{\varphi_1}(u) \triangle C_{\varphi_1}(v)| < d\ |\ Z_{D}) = 0.
\]

  From now on, assume that $s-t < d$ and so $s < t+ d \leq t+ \ell + d \leq M+d$.  Supposing that $|C_{\varphi_1}(u) \triangle C_{\varphi_1}(v)| \leq d -1 < d$ holds, 
\begin{align*}
d -1	&\geq |C_{\varphi_1}(u) \triangle C_{\varphi_1}(v)|\\
	&= |C_{\varphi_1}(u) \setminus C_{\varphi}(v)| + |C_{\varphi_1}(u) \cap (C_{\varphi}(v) \setminus C_{\varphi_1}(v))| +\\
	& \hspace{1.5in} + |C_{\varphi_1}(v) \setminus C_{\varphi}(u)|+ |D \cap C_{\varphi_1}(v)|\\
	&\geq (s-t) + (s-|C_{\varphi}(v) \setminus (C_{\varphi_1}(v) \cup C_{\varphi}(u))|) +\\
	&\hspace{1.5in} + (\ell - |(C_{\varphi}(v) \setminus C_{\varphi_1}(v)) \cap D|). 
\end{align*}
Therefore,
\begin{align*}
|C_{\varphi}(v) \setminus (C_{\varphi_1}(v) \cup C_{\varphi}(u))| &+
	|(C_{\varphi}(v) \setminus C_{\varphi_1}(v))\cap D|\\
	&=|\varphi[E_1(v)] \setminus C_{\varphi}(u)| + |\varphi[E_1(v)] \cap D|\\
	&\geq (s-t) + s + \ell -d + 1\\
	&= 2(s-t) + t + \ell -d + 1\\
	&\geq m-d + 1.
\end{align*}

That is, of the colours deleted from $C_{\varphi}(v)$ when the edges in $E_1$ are removed, at least $m-d + 1$ are either from $C_{\varphi}(v) \setminus C_{\varphi}(u)$ or else $D \cap C_{\varphi}(v)$.  Since $|C_{\varphi}(v) \setminus C_{\varphi}(u)| = s$ and $|D \cap C_{\varphi}(v)| = \ell$, 
\begin{align*}
\prob(|C_{\varphi_1}(u) \triangle & C_{\varphi_1}(v)| < d|\, Z_{D})\\
		&\leq \prob(|\varphi[E_1(v)] \setminus C_{\varphi}(u)| + |\varphi[E_1(v)] \cap D| \geq m-d+1 |\, Z_{D})\\
		&\leq 2^{\ell} 2^s  p^{m-d+1}\\
		&\leq  2^M 2^{M+d} p^{m-d+1}\\
		&=2^{2M+d} p^{m-d+1}\\
\end{align*}
uniformly, for all choices of $D$.  Thus, for each $uv \in E(V_h)$,
\[
\prob(A_{uv}) \leq 2^{2M+d} p^{m-d+1}.
\]

\noindent \emph{Claim:}  There exists a constant $c_0$ such that if $v \in V_h$ then $\prob(B_v) \leq 3e^{-c_0 \Delta}$.\\

\noindent \emph{Proof of Claim:}  Given $v \in V_h$, consider the event $B_v$ that $|\{u \in N(v): \deg_{E_1} < m\}| \leq \varepsilon \Delta$.  Set
\begin{align*}
V_m &= \{u \in V_h:\ \deg_X(u) <m\}\\
V_M &= \{u \in V_h: \deg_X(u) > M\}\\
V_N &= \{u \in V_h: m \leq \deg_X(u) \leq M \text{ and } \exists\ w \in V_M \cap N(u) \text{ with } uw \in X \}.
\end{align*}
Then, $\{u \in V_h: \deg_{E_1}(u) < m\} \subseteq V_m \cup V_M \cup V_N$ and so by the pigeonhole principle, if $|\{u \in N(v): \deg_{E_1}(u) < m\}| > \varepsilon \Delta$, then either $|N(v) \cap V_m| > \varepsilon \Delta/3$, $|N(v) \cap V_M| > \varepsilon \Delta/3$, or else $|N(v) \cap V_N| \geq \varepsilon \Delta/3$. The probability of each of these three events occurring are considered separately, although the calculations are all similar.\\

\noindent \textbf{Case 1}: Consider the event $|N(v) \cap V_M| > \Delta \varepsilon/3$.  Note that for each $w \in V_M$, the quantity $\deg_X(w)$ is a binomial random variable with parameters $\deg_G(w)$ and $p$. 
\begin{align*}
\expect(|N(v) \cap V_M|)	
	&=\sum_{w \in N(v)} \prob(w \in V_M)\\
	&=\sum_{w \in N(v)} \prob(\deg_X(w) > M)\\
	&\leq \sum_{w \in N(v)} e^{M-p\deg_G(w)}\left(\frac{p\deg_G(w)}{M}\right)^M  &&\text{(by Lemma \ref{L:binomtail})}\\
	&\leq \Delta e^{M-\lambda/2} \left(\frac{\lambda}{M}\right)^M.
			&&\hspace{-0.7in} \text{(since $\lambda/2 \leq p\deg_G(w) \leq \lambda$)}	
\end{align*}

Changing the status of any one edge in $X$ changes the size of the set $V_M$ by at most $2$ and if $|N(v) \cap V_M| \geq a$, this event can be certified by the status of a collection of at most $M a$ edges.  Since $M = 2e \lambda$ and $\lambda \geq 2 \ln(3/\varepsilon)$, it follows that $e^{M - \lambda/2}(\lambda/M)^M \leq e^{-\lambda/2} 1/2^M < \varepsilon/3$.  Thus, by Theorem \ref{T:tal},
\begin{align*}
\prob(|N(v) \cap V_M| \geq \Delta \varepsilon/3)
	&\leq \exp\left({-\frac{(\Delta \varepsilon/3 - \Delta e^{-\lambda/2}(e\lambda/M)^M)^2}{2\cdot 2^2 M \Delta \varepsilon/3}}\right)\\
	&=\exp\left({-\Delta 3\frac{(\varepsilon/3 - e^{-\lambda/2}(e\lambda/M)^M)^2}{8M\varepsilon}}\right).
\end{align*}\\

\noindent \textbf{Case 2}: Now, consider the event $|N(v) \cap V_N| \geq \varepsilon \Delta/3$.  Fix $u \in N(v)$.  Then,
\begin{align*}
\prob(u \notin (V_M \cup V_m) &\text{ and } \exists w \in N(u) \cap V_M \text{ with } uw \in X)\\
	&\leq \prob(\exists\ w \in N(u) \cap V_M \text{ with } uw \in X)\\
	&\leq \sum_{w \in N(u)} \prob(w \in N_M \text{ and } uw \in X)\\
	&=\sum_{w \in N(u)} \prob(w \in N_M |\ uw \in X) \prob(uw \in X)\\
	&=\sum_{w \in N(u)} e^{M-\lambda/2} \left(\frac{\lambda}{M}\right)^M p	&&\text{(as above)}\\
	&\leq \Delta p e^{M-\lambda/2} \left(\frac{\lambda}{M}\right)^M\\
	&= \lambda e^{-\lambda/2}\left(\frac{\lambda e}{M} \right)^M.
\end{align*}
Thus,
\[
\expect(|\{u \in N(v) \setminus (V_M \cup V_m): \exists\ uw \in X \text{ with}\ w \in V_M\}|) \leq \Delta \lambda e^{-\lambda/2}\left(\frac{\lambda e}{M} \right)^M.
\]

Changing the status of one edge with respect to $X$  changes the value of $|N(v) \cap V_N|$ by at most $2M$ (since the only cases where anything changes are when some vertex is adjacent to exactly $M$ or $M+1$ edges in $X$).  As before, the event that  $|N(v) \cap V_N|\geq a$ can be certified by the status of a collection of at most $Ma$ edges.  By the choice of $M$ and $\lambda$, $\varepsilon/3 > \lambda e^{-\lambda/2}\left(\frac{\lambda e}{M} \right)^M$ and so by Theorem \ref{T:tal},
\begin{align*}
\prob(|N(v) \cap V_N| > \Delta \varepsilon/3)
	&\leq \exp\left({-\frac{(\Delta \varepsilon/3 - \Delta \lambda e^{-\lambda/2}\left(\frac{\lambda e}{M} \right)^M)^2}{2 (2M)^2 M \Delta \varepsilon/3}}\right)\\
	&= \exp\left({-\Delta 3\frac{(\varepsilon/3 - \lambda e^{-\lambda/2}(\lambda e/M)^M)^2}{8 M^3 \varepsilon}}\right).
\end{align*}\\

\noindent \textbf{Case 3}:  Finally, consider the event $|N(v) \cap N_m| > \Delta \varepsilon/3$.
\begin{align*}
\expect(|N(v) \cap N_m|)
	&=\sum_{w \in N(v)} \prob(w \in V_m)\\
	&= \sum_{w \in N(v)} \prob(\deg_X(w) < m)\\
	&\leq \sum_{w \in N(v)} e^{-(m-p\deg_G(w))^2/2p \deg_G(w)}
			&&\text{(by Lemma \ref{L:binomtail})}\\
	&\leq \sum_{w \in N(v)} e^{-(m-\lambda/2)/\lambda}\\
	&\leq \Delta e^{-(m-\lambda/2)/\lambda}.
\end{align*}
As in Case 1, changing the status of any edge changes the value of $|N(v) \cap V_N|$ by at most $2$ and the the event $|N(v) \cap V_m| \geq a$ can be certified by a collection of at most $ma$ edges.  By the choice of $\lambda$, $(m-\lambda/2)^2/\lambda \geq \ln{(3/\varepsilon)}$ and therefore, since $e^{-(m-\lambda/2)/\lambda} < \varepsilon/3$, by Theorem \ref{T:tal},
\begin{align*}
\prob(|N(v) \cap V_m| > \Delta \varepsilon/3)
	&\leq \exp \left({-\frac{(\Delta \varepsilon/3 - \Delta e^{-(m-\lambda/2)/\lambda})^2}{2 \cdot 2^2 m \Delta \varepsilon/3}}\right)\\
	&= \exp \left({-3 \Delta \frac{(\varepsilon/3 - e^{-(m-\lambda/2)/\lambda})^2}{8m}\varepsilon}\right).
\end{align*}

Set
\begin{align*}
c_0 &= \frac{3}{8 \varepsilon} \min \bigg\{\frac{(\varepsilon/3 - e^{-\lambda/2}(e\lambda/M)^M)^2}{M}, 
\frac{(\varepsilon/3 - \lambda e^{-\lambda/2}(e\lambda/M)^M)^2}{M^3},\\
	& \hspace{2.4in} \frac{(\varepsilon/3 - e^{-(m-\lambda/2)^2/\lambda})^2}{m}\bigg\}.
\end{align*}

 Then, $\prob(|N(v) \cap A| > \varepsilon \Delta) \leq 3 e^{-c_0 \Delta}$. Thus, for each $v \in V_h$, 
\[
\prob(B_v) \leq 3 e^{-c_0 \Delta}. 
\]

Let $u,v, w, z \in V_h$.  The events $A_{uv}$ and $A_{wz}$ are independent if $d(u, w) \geq 4$, $A_{uv}$ is independent of $B_w$ if $d(u, w) \geq 5$ and $B_u$ is independent of $B_w$ if $d(u,w) \geq 6$.  Therefore, when $\Delta \geq 2$, for fixed $u$ and $v$, the event $A_{uv}$ is independent of all but at most $(1+ \Delta + \Delta(\Delta-1) + \Delta(\Delta-1)^2)\Delta \leq \Delta^4$ events $A_{wz}$ and all but at most $1 + \Delta + \Delta(\Delta-1) + \Delta(\Delta-1)^2 + \Delta(\Delta-1)^3 \leq \Delta^4$ events $B_w$.  Meanwhile, the event $B_v$ is independent of all but at most $(1 + \Delta + \Delta(\Delta-1) + \Delta(\Delta-1)^2 + \Delta(\Delta-1)^3)\Delta \leq \Delta^5$ events $A_{wz}$ and all but at most $1 + \Delta + \Delta(\Delta-1) + \Delta(\Delta-1)^2 + \Delta(\Delta-1)^3 + \Delta(\Delta-1)^4 \leq \Delta^5$ events $B_w$.

Set $\gamma_1 = \ln{\Delta}/\Delta^5$ and $\gamma_2 = 1/\Delta^5$ and let $uv \in E(V_h)$.  Using the inequality $(1-t) \geq e^{-t -t^2}$, valid for $t$ small enough,
\begin{align*}
\gamma_1 (1-\gamma_1)^{\Delta^4}(1-\gamma_2)^{\Delta^4}
		& \geq \frac{\ln{\Delta}}{\Delta^5} e^{-\ln{\Delta}/\Delta(1 + \ln{\Delta}/\Delta^5)} e^{-1/\Delta(1 + 1/\Delta^5)}\\
		& \geq \frac{\ln{\Delta}}{2 \Delta^5}	
					&&\text{(for $\Delta \geq 4$)}\\
		&\geq \frac{2^{2M+d}\lambda^{m-d+1}}{\Delta^{m-d+1}}
					&&\hspace{-1.2in}\text{(for $\Delta$ large, $m-d+1 \geq 5$)}\\
		&\geq \prob(A_{u,v}).
\end{align*}
Similarly,
\begin{align*}
\gamma_2 (1-\gamma_1)^{\Delta^5}(1-\gamma_2)^{\Delta^5}
	&= \frac{1}{\Delta^5} \left(1 - \frac{\ln{\Delta}}{\Delta^5} \right)^{\Delta^5} \left(1 - \frac{1}{\Delta^5} \right)^{\Delta^5}\\
	&\geq \frac{1}{\Delta^5} e^{-\ln{\Delta}(1 + \ln{\Delta}/\Delta)}e^{-1 + 1/\Delta^5}\\
	&\geq \frac{1}{\Delta^5}\frac{1}{\Delta^2}\frac{1}{3}
							&&\text{(for $\Delta \geq 2$)}\\
	&\geq 3 e^{-c_0 \Delta}	&&\text{(for $\Delta$ large)}\\
	&\geq \prob(B_v).
\end{align*}

Therefore, since $\gamma_1, \gamma_2 \in (0,1)$, by the Local Lemma (Theorem \ref{T:LLL}), 
\[
\prob \bigg(\bigcap_{uv \in E(V_h, V_h)}\overline{A_{uv}} \cap \bigcap_{v \in V_h}\overline{B_{v}}\bigg) > 0.
\]
\end{proof}

Next, by deleting a few more edges from $G$, the vertices in $V_h$ that might not have been distinguished from one of their neighbours in $G[E \setminus E_1]$ can be made to have colour sets different from their neighbours.

\begin{lemma}\label{L:highdegB}
For each $\alpha,\beta >0$ with $\alpha> \beta$, $M > 0$ and $B \geq 2$ there is a $\Delta_3 > 0$ such that if $G = (V,E)$ is a graph with $\Delta(G)  \geq \Delta_3$, $\varphi$ is a proper total colouring of $G$ and there is a set $L \subseteq \{v \in V: \deg(v) > \alpha \Delta(G)\}$ such that if $v \in V$ with $\deg(v) > \alpha \Delta(G)$, then $|N(v) \cap L| \leq \beta \Delta(G)$, and $E_1 \subseteq E$ so that $\Delta(G[E_1]) \leq M$, then there is a set of edges, $E_2 \subseteq E \setminus E_1$ so that, setting $\varphi_2 = \varphi|_{V \cup E \setminus (E_1 \cup E_2)}$,
\begin{enumerate}[(a)]
	\item if $u \in L$ then $\deg_{E_2}(u) = B$,
	\item if $v \notin L$ and $\deg(v) > \alpha \Delta(G)$, then $|E_2 \cap E(v)| \leq B-1$, and
	\item if $u, v \in L$ are adjacent, then $C_{\varphi_2}(u) \neq C_{\varphi_2}(v)$.
\end{enumerate}
\end{lemma}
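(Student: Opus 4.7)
The plan is to sample $E_2$ randomly at each $u \in L$ and verify all three properties via the Lov\'{a}sz Local Lemma, in the spirit of the proof of Lemma \ref{L:highdegA}. For each $u \in L$ set $A_u = \{uw \in E \setminus E_1 : w \notin L\}$; the hypotheses $\deg(u) > \alpha \Delta$, $|N(u) \cap L| \leq \beta \Delta$ and $\deg_{E_1}(u) \leq M$ together give $|A_u| \geq (\alpha - \beta)\Delta - M$, which is at least $B$ once $\Delta$ is large. Independently across $u \in L$, pick $F_u$ uniformly at random from the $B$-element subsets of $A_u$ and set $E_2 = \bigcup_{u \in L} F_u$. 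Condition (a) is then automatic: every edge of $F_u$ has its unique $L$-endpoint at $u$, so the sets $F_u$ are pairwise disjoint, $E_2 \cap E(u) = F_u$, and $\deg_{E_2}(u) = B$.

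For (b) and (c), introduce the bad events
\[
A_{uv} = \{C_{\varphi_2}(u) = C_{\varphi_2}(v)\} \quad \text{for adjacent } u,v \in L
\]
and
\[
B_v = \{|E_2 \cap E(v)| \geq B\} \quad \text{for } v \notin L \text{ with } \deg(v) > \alpha \Delta.
\]
To bound $\prob(A_{uv})$, condition on $F_v$: this fixes $C_{\varphi_2}(v)$, and equality with $C_{\varphi_2}(u)$ then forces $\varphi[F_u]$ to equal one specific $B$-subset of $\varphi[A_u]$. Since $\varphi$ is injective on the edges at $u$, $F_u$ itself is thereby pinned, so $\prob(A_{uv}) \leq \binom{|A_u|}{B}^{-1} = O(\Delta^{-B})$, which is $O(\Delta^{-2})$ because $B \geq 2$. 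To bound $\prob(B_v)$, observe that $|E_2 \cap E(v)| = \sum_{u \in N(v) \cap L} \mathbf{1}[uv \in F_u]$ is a sum of at most $\beta\Delta$ independent Bernoulli indicators, each of probability at most $B/((\alpha - \beta)\Delta - M)$, and apply Lemma \ref{L:binomtail}.

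Each $A_{uv}$ depends only on $F_u$ and $F_v$, and each $B_v$ only on the $F_w$ for $w \in N(v) \cap L$, so every dependency neighbourhood has size $O(\Delta^2)$. An asymmetric Local Lemma calculation, with weights polynomially small for the $A_{uv}$ events and a suitable small constant for the $B_v$ events, then shows that with positive probability no bad event occurs, yielding an $E_2$ with all three properties. The main obstacle will be $\prob(B_v)$: its mean $B\beta/(\alpha - \beta)$ is independent of $\Delta$, so the binomial tail is only a constant, and clearing the Local Lemma inequality against the $O(\Delta^2)$ dependencies will require $\beta$ to be a sufficiently small fraction of $\alpha$ and $\Delta_3$ to be chosen correspondingly large in terms of $\alpha, \beta, B$ and $M$. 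By contrast, the bound on $\prob(A_{uv})$ drops out immediately from the injectivity of $\varphi$ on the edges at $u$ once $F_v$ is conditioned out.
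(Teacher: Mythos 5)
Your random construction, the handling of condition (a), and the bound $\prob(A_{uv}) \leq \binom{|A_u|}{B}^{-1}$ all match the paper (the paper calls these events $B_{u,v}$), and the conditioning argument you give for that bound is correct. The gap is in your treatment of condition (b). You replace the paper's family of events --- one event $A_{v,\{u_1,\dots,u_B\}}$ for \emph{each} $B$-subset $\{u_1,\dots,u_B\}\subseteq N(v)\cap L$, asserting that all $B$ of those particular edges land in $E_2$ --- by the single aggregated event $B_v=\{|E_2\cap E(v)|\ge B\}$. Each of the paper's events has probability $O(\Delta^{-B})$, matching the scale of $\prob(A_{uv})$; your $B_v$ has probability $\Theta(1)$ (roughly $(B\beta/(\alpha-\beta))^B/B!$), and you flag this yourself. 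But you underestimate the consequence: it is not merely a constraint that $\beta$ be small relative to $\alpha$, it is fatal for the Local Lemma for \emph{every} fixed $\alpha>\beta>0$ as $\Delta\to\infty$.

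The problem is the interaction of the two event types. Each of your $A_{uv}$ events depends on $F_u$ and $F_v$, and each of $F_u,F_v$ feeds into $B_w$ for every high-degree $w\notin L$ adjacent to $u$ (resp.\ $v$), so $A_{uv}$ lies in the dependency neighbourhood of $\Theta(\Delta)$ events $B_w$. In the asymmetric Local Lemma you therefore need
\[
\prob(A_{uv}) \;<\; x_{A}\,(1-x_{B})^{\Theta(\Delta)}\cdot(\text{other factors}).
\]
If you take $x_B$ to be a constant, as you propose, the right side decays exponentially in $\Delta$ while $\prob(A_{uv})=\Theta(\Delta^{-B})$ decays only polynomially, so the inequality fails for large $\Delta$. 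If instead you force $x_B=o(1)$ to repair this, then the Local Lemma condition $\prob(B_v)<x_B\prod(1-x)$ fails, because $\prob(B_v)$ is bounded away from $0$ independently of $\Delta$. There is no choice of weights that works. The paper's decomposition into the fine events $A_{v,\{u_1,\dots,u_B\}}$ is precisely what avoids this: every bad event then has probability $O(\Delta^{-B})$, the dependency degree is $O(\Delta^{B})$, and the product is a constant that can be controlled (and avoiding all the fine events still forces $|E_2\cap E(v)|\le B-1$, giving (b)). You should adopt that finer event family rather than the coarse $B_v$.
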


\begin{proof}
Fix $\alpha > \beta >0$, $B \geq 2$ and let $G$ be a graph with $\Delta(G) = \Delta$, with $\varphi$ a proper total colouring of $G$, $L \subseteq \{v \in V: \deg(v) > \alpha \Delta\}$ with the property that if $\deg(v) > \alpha \Delta$, then $|N(v) \cap L| \leq \beta \Delta$ and $E_1 \subseteq E$ with $\Delta(G[E_1]) \leq M$.

Note that for each $u \in L$, $|N(u) \setminus L| \geq \alpha \Delta - \beta \Delta \geq B +M$.  Select $E_2$ at random as follows:  for each $u \in L$, select a set of $B$ edges in $E \setminus E_1$ from $u$ to $N(u) \setminus L$ uniformly at random to include in $E_2$.

For each $v \notin L$ with $\deg(v) \geq \alpha \Delta$, and $u_1, u_2, \ldots u_B \in N(v) \cap L$, let $A_{v, \{u_1, u_2, \ldots, u_B\}}$ be the event that all of the edges $vu_1, vu_2, \ldots, v u_B$ belong to $E_2$.  For each $u, v \in L$ with $uv \in E$, let $B_{u,v}$ be the event that $C_{\varphi_2}(u) = C_{\varphi_2}(v)$.  Again using the Local Lemma, it is shown that the probability that none of the events $A_{v, \{u_1, u_2, \ldots, u_B\}}$ or $B_{u,v}$ occur is strictly positive and hence there is a choice of $E_2$ that satisfies the conditions $(b)$ and $(c)$.  Note that any choice of $E_2$ satisfies condition $(a)$ by construction.

Fix $v \notin L$ with $\deg(v) \geq \alpha \Delta$ and $u_1, \ldots, u_B \in N(v) \cap L$.  For each $i = 1, 2, \ldots, B$,
\begin{align*}
\prob(vu_i \in E_2)	&=\frac{\binom{\deg(u_i) - |N(u) \cap L|-M}{B-1}}{\binom{\deg(u_i) - |N(u) \cap L|-M}{B}}\\
					&=\frac{B}{\deg(u_i) - |N(u) \cap L| -M-B+1}\\
					&\leq \frac{B}{\alpha \Delta - \beta \Delta - M -B+1}\\
					&=\frac{B}{(\alpha - \beta)\Delta -M -B+1}
\end{align*}
and hence
\[
\prob(A_{v, \{u_1, u_2, \ldots, u_B\}}) \leq \left( \frac{B}{(\alpha - \beta)\Delta -B+1}\right)^B.
\]

Given $u, v \in L$ with $uv \in E$, fix $C_B \subseteq \varphi[E(u, V_{\alpha})]$ with $|C_B| = B$.  If $C_B = \varphi[E_2(u)]$, then either $\prob(C_{\varphi_2}(u) = C_{\varphi_2}(v) |\ C_B = \varphi[E_2(u)]) = 0$ or else there is exactly one set of $B$ colours $C_{v,B}$ with $C_{\varphi}(u) \setminus C_B = C_{\varphi}(v) \setminus C_{v,B}$.  Thus,
\begin{align*}
\prob(C_{\varphi_2}(u) = C_{\varphi_2}(v) |\ C_B = \varphi[E_2(u)])
	&\leq \frac{1}{\binom{\deg(v) - |N(v) \cap L| - M}{B}}\\
	&\leq \frac{1}{\binom{(\alpha - \beta)\Delta - M}{B}}\\
	&\leq \left(\frac{B}{(\alpha - \beta)\Delta - M} \right)^B
\end{align*}
and hence
\[
\prob(B_{u,v}) \leq \left(\frac{B}{(\alpha - \beta)\Delta - M} \right)^B.
\]

Two events of the form $A_{v_1, \{u_1, \ldots, u_B\}}$ and $A_{v_2, \{w_1, \ldots, w_B\}}$ are independent whenever $\{u_1, \ldots, u_B\} \cap \{w_1, \ldots, w_B\} = \emptyset$ and events $A_{v_1, \{u_1, \ldots, u_B\}}$ and $B_{u,w}$ are independent if $\{u_1, \ldots, u_B\} \cap \{u,w\} = \emptyset$.  Similarly, two events $B_{u,w}$ and $B_{u', w'}$ are independent if $\{u,w\} \cap \{u', w'\} = \emptyset$.  Thus, an event $A_{v_1, \{u_1, \ldots, u_B\}}$ is independent of all but at most $B \Delta \binom{\beta \Delta}{B-1}$ events of the type $A_{v_2, \{w_1, \ldots, w_B\}}$ and all but at most $B \beta \Delta$ events of the type $B_{u,w}$.  Similarly, an event $B_{u,w}$ is independent of all but at most $2 \Delta \binom{\beta \Delta}{B-1}$ events of the type $A_{v_2, \{w_1, \ldots, w_B\}}$ and all but $2 \beta \Delta$ events of the type $B_{u', w'}$.

Therefore, by the Local Lemma \ref{T:LLL}, since, for $\Delta$ sufficiently large
\[
\left(\frac{B}{(\alpha - \beta)\Delta -M - B+1} \right)^B \left(B \beta \Delta + B \left(\frac{\beta}{B-1} \right)^{B-1} \Delta^{B-1} + 1 \right) e \leq 1,
\]
there is a choice of $E_2$ that satisfies conditions $(a)$ and $(b)$ in the statement of the lemma.
\end{proof}

\begin{proof}[Proof of Proposition \ref{P:highdeg}]
 Set $\varepsilon = 1/3$, $m=8$, $d=4$ and let $\Delta_2 >0$ and $M>0$ be given by Lemma \ref{L:highdegA}.  Set $\alpha = 1/2$, $\beta = 1/3$, $B=2$ and let $\Delta_3$ be given by Lemma \ref{L:highdegB}.  Let $G=(V,E)$ be a graph with $\Delta(G) = \Delta \geq \max\{\Delta_2, \Delta_3\}$ and let $\varphi$ be a total $k$-colouring of $G$.

Let $E_1 \subseteq E$ be given by Lemma \ref{L:highdegA} and for $L = \{v \in V_h: \deg_{E_1}(v) < 8\}$ let $E_2 \subseteq E\setminus E_1$ be given by Lemma \ref{L:highdegB}.  By the choice of $E_1$ and $E_2$, $\Delta(G[E_1 \cup E_2]) \leq M+2$ and so by Vizing's theorem, there is a proper edge colouring, $\psi$, of $G[E_1 \cup E_2]$ with $M+3$ colours.  Let these $M+3$ colours be disjoint from the set of colours used by $\varphi$.  Define a total colouring $\varphi'$ of $G$ as follows
\[
\varphi'(x) = 
	\begin{cases}
		\varphi(x),	&\text{for $x \in V \cup E \setminus (E_1 \cup E_2)$;}\\
		\psi(x),		&\text{for $x \in E_1 \cup E_2$.}
	\end{cases}
\]

The map $\varphi'$ is a proper total $(k+M+3)$-colouring of $G$.  For each $u, v \in V_h$ with $uv \in E$, if $u \notin L$, then $|C_{\varphi_2}(u) \Delta C_{\varphi_2}(v)| \geq 4 - (2+1) >0$ and so $C_{\varphi'}(u) \neq C_{\varphi'}(v)$.  If $u, v \in L$ and $uv \in E$, then $C_{\varphi_2}(u) \neq C_{\varphi_2}(v)$ by the choice of $E_2$ and so $C_{\varphi'}(u) \neq C_{\varphi'}(v)$.
\end{proof}

\section{Proof of Theorem \ref{T:mainthm}}\label{S:mainthmpf}

\begin{proof}[Proof of Theorem \ref{T:mainthm}]
Let $G=(V,E)$ be a graph with $\Delta(G) \geq \Delta_0$ and let $\varphi$ be a proper total colouring of $G$ with $\chi''(G)$ colours.  By Proposition \ref{P:highdeg}, there is a proper total ($\chi''(G) + C_1$)-colouring of $G$ so that for each $u, v \in V_h$, if $uv \in E$, then $C_{\varphi'}(u) \neq C_{\varphi'}(v)$.

By Proposition \ref{P:lowdeg}, there is a proper total colouring $\varphi''$  with $\varphi''|_{E \cup V_h} = \varphi'|_{E\cup V_h}$ that distinguishes every vertex in $V_{\ell}$ from each of its neighbours.  By the choice of $\varphi''$, if $v \in V_h$, $C_{\varphi''}(v) = C_{\varphi'}(v)$ and hence $\varphi''$ distinguishes each vertex in $V$ from every one of its neighbours.
\end{proof}

Following through the calculations in the proofs carefully, it can be shown that for $\varepsilon = 1/3$, $m=8$, $d=4$ and $B=2$, then $\lambda$ can be taken to be $34$ and $M=81$.  While this estimate is likely not optimal, and  does not seem apply to many real-world examples, it shows that for a graph $G$ with $\Delta(G) \geq \exp(10^{58})$, then $\chi_{at}(G) \leq \chi''(G) + 84$.

\end{document}